\newtheorem{theorem}{Theorem}[section]
\newtheorem{corollary}[theorem]{Corollary}
\newtheorem{lemma}[theorem]{Lemma}
\newtheorem{proposition}[theorem]{Proposition}
\theoremstyle{definition}
\newtheorem{definition}[theorem]{Definition}
\newtheorem{remark}[theorem]{Remark}
\newtheorem{subsec}[theorem]{}
\newcommand{\End}{\operatorname{End}}
\newcommand{\Br}{\operatorname{Br}}
\newcommand{\Ker}{\operatorname{Ker}}
\newcommand{\Tr}{\operatorname{Tr}}
\newcommand{\Ind}{\operatorname{Ind}}
\newcommand{\Res}{\operatorname{Res}}
\newcommand{\Aut}{\operatorname{Aut}}
\newcommand{\Od}{\mathcal{O}}
\begin{document}

\title[]{Group graded basic Morita equivalences}
\author[Tiberiu Cocone\c t and Andrei Marcus]{Tiberiu Cocone\c t and Andrei Marcus}
\address{Babe\c s-Bolyai University, Faculty of Mathematics and Computer Science,  \newline Str. Mihail Kog\u alniceanu 1, RO-400084 Cluj-Napoca, Romania}
\email{tiberiu.coconet@math.ubbcluj.ro} \email{marcus@math.ubbcluj.ro}
\date{}

\abstract   We introduce group graded basic Morita  equivalences between algebras determined by blocks of normal subgroups, and by using the extended Brauer quotient, we show that they induce  graded basic Morita  equivalences at local levels. \endabstract

\subjclass[2010]{20C20}
\keywords{Group algebras, normal subgroups, group graded algebras, blocks, basic Morita equivalence, extended Brauer quotient.}

\maketitle
\section{Introduction}

Categorical equivalences between blocks of group algebras have been intensely studied over the last four decades, as they provide a structural explanation for various character correspondences which have been observed much earlier. It has turned out that in many cases, these are not mere equivalences between algebras -- they are also  compatible with the $p$-local structure of the blocks, encoded in terms of Brauer pairs or local pointed groups associated to subgroups of the defect groups, and their fusions.   The motivation is related to the older idea of obtaining information about the block from information about blocks of local subgroups.

In the case of principal blocks, J.~Rickard \cite{Ri} introduced the so-called splendid derived equivalences, which involve permutation bimodules with diagonal vertices and trivial sources, with the feature that they give rise to derived equivalences between principal blocks of centralizers of $p$-subgroups. Splendid equivalences have been generalized to arbitrary blocks by M.~Linckelmann \cite{L1}, \cite{L2} and M.E.~Harris \cite{Ha}. Later, a far reaching generalization has been achieved by L.~Puig \cite{Puig}, who introduced basic equivalences, which also have a significant local structure, and induce equivalences between block of the centralizers of subgroups of the defect groups by employing the Brauer construction. Moreover, L.~Puig and Y.~Zhou \cite{Puig3}, \cite{PZ2} proved that these local equivalences extend to equivalences between blocks of the normalizers of subgroups of the defect groups. It was pointed out by X.~Hu \cite{Hu} that in fact, one gets in \cite{Puig3} graded Morita equivalences, via the construction introduced in \cite{Marcus}.

In this paper, we investigate the local structure of graded Morita equivalences in a general setting. We fix a complete discrete valuation ring of characteristic zero with residue field $k$ of characteristic $p>0$,  finite groups $G$ and $G'$ with normal subgroups $N$ and $N'$ respectively, such that the factor groups $G/N$ and $G'/N'$ are isomorphic. Let $\ddot G$ be the diagonal subgroup of $G\times G'$ with respect to the given isomorphism between $G/N$ and $G'/N'$. Let $b$ be a $G$-invariant block of $\mathcal{O}N$, and let $b'$ be a $G'$-invariant block of $\mathcal{O}N'$.

A $G/N$-graded bimodule $\ddot M$ inducing a Morita equivalence between $A=\mathcal{O}Gb$ and $A'=\mathcal{O}G'b'$ has an $1$-component which can be regarded as an indecomposable $\mathcal{O}\ddot{G}$-module, so it has a vertex $\ddot P$ and a source $\ddot N$. The discussion of the Morita equivalences in terms of $\ddot P$ and $\ddot N$  requires the introduction of a natural group graded structure on Puig's $\mathcal{O}G$-interior Hecke algebra $\End_{\mathcal{O}(1\times G')}(\Ind_{\ddot{P}}^{G\times G'}(\ddot{N}))$. We studied this graded structure in detail in \cite{CM}. In Section 3 we characterize in these terms the $G/N$-graded Morita equivalences between $\mathcal{O}Gb$ and $\mathcal{O}G'b'$, by linking, via the source module $\ddot N$, a defect pointed group $P_\gamma$ of $G_{\{b\}}$ on $\mathcal{O}Nb$ and a defect pointed group $P'_{\gamma'}$ of ${G'}_{\{b'\}}$ on $\mathcal{O}N'b'$. Our first main result, Theorem \ref{main} below, relates the Morita equivalence between the block extensions $A$ and $A'$ induced by $\ddot M$ to a $G/N$-graded Morita equivalence between the source algebras $A_\gamma$ and $A'_{\gamma'}$.

In Section 4 we introduce the graded version of  basic Morita equivalence between $\mathcal{O}Gb$ and $\mathcal{O}G'b'$, and we show in Corollary \ref{c:truncation} that the truncation (that is, restriction of the grading to a subgroup of $\Gamma$) of a group graded basic Morita equivalence is again a group graded basic Morita equivalence.

The extended Brauer quotient of $\mathcal{O}G$ with respect to a $p$-subgroup $Q$ of $G$ was introduced in \cite[Section 3]{Puig3}, and it was generalized to $N$-interior $G$-algebras in \cite{CT}. In our situation, we show in Section 5 that the $G/N$-grading on $\mathcal{O}G$ induces a group grading on the extended Brauer quotient, given by a certain subgroup of the normalizer $N_G(Q)$ modulo the centralizer $C_N(Q)$.

This observation is used in our second  main result, Theorem \ref{t:localequiv}  below, where we prove that a basic graded  Morita equivalence between $\mathcal{O}Gb$ and $\mathcal{O}G'b'$ induces a group graded basic equivalence at local levels. This is a common generalization of \cite[Theorem 1.4]{Puig3} and \cite[Corollary 3.9]{Marcus}. More precisely, \cite[Theorem 1.4]{Puig3} is obtained in the case $G=N$, while in \cite[Corollary 3.9]{Marcus} we have only principal blocks $b$ and $b'$, with Sylow $p$-subgroups $P\le N$ and $P'\le N'$, and we do not consider the extended Brauer quotient.

One should mention that Puig also discussed in his book \cite{Puig} the other types of equivalences relevant to block theory -- stable equivalences of Morita type and Rickard equivalences, and moreover, the results of \cite{Puig3} are extended in \cite{PZ2} to obtain local Rickard equivalences with a group graded structure. However, the comparison of the results on group graded equivalences presented in \cite[Theorem 5.1.2]{M} (Morita),  \cite[Theorem 5.2.5]{M} (Rickard) and \cite[Proposition 5.3.7]{M} (stable) suggests that the assumption that the characteristic $p$ does not divide the order of the grading group $G/N$ is quite reasonable in the case of stable and Rickard equivalences.

We introduce our notations and basic assumptions in Section 2. We will freely use the language of pointed groups on $G$-algebras and their fusions, as presented in \cite{Puig}, \cite{Puig1} and \cite{The}. We will not need to assume that $k$ is a splitting field for the group algebras in discussion, so certain field extensions of $k$ will occur below. We also refer to \cite{M} for results on group graded algebras and modules.

\section{Preliminaries} \label{sect:prelim}

The following notations and assumptions will be in force for the remaining of the paper.

\begin{subsec} \label{s:GN} Let $G$ and $G'$ be finite groups, let $\omega:G\to \Gamma$ and $\omega':G'\to \Gamma$ be group epimorphisms, and denote  $N:=\mathrm{Ker} \omega$ and $N':=\mathrm{Ker} \omega'$,  so that \[\Gamma\simeq G/N\simeq G'/N'.\] Let $\ddot G$ be the inverse image in $G\times G'$ of the diagonal subgroup of $\Gamma\times \Gamma$, that is
\[\ddot G=\{(g,g')\in G\times G'\mid \omega(g)=\omega'(g') \}.\] Consider the diagonal subalgebra
\[\Delta=\bigoplus_{(g,g')\in [\ddot G/N\times N']}\mathcal{O}Ng\otimes_{\mathcal{O}}\mathcal{O}N'g',\]
of the $\Gamma\times\Gamma$-graded algebra $\mathcal{O}G\otimes_{\mathcal{O}}\mathcal{O}G'$ (with respect to the maps $\omega$ and $\omega'$), so $\Delta$ is naturally isomorphic, as a $\Gamma$-graded algebra, to the group algebra $\mathcal{O}\ddot G$.
\end{subsec}

\begin{subsec} \label{assumption_on_M}  Let $b$ be a block of $\mathcal{O}N$ and let  $b'$ be a block of $\mathcal{O}N'$ such that $b$ is $G$-invariant and $b'$ is $G'$-invariant. Set $A:=\mathcal{O}Gb$ and $A':=\mathcal{O}G'b'.$ Then both $A$ and $A'$ are $\Gamma$-graded in an obvious way, and we denote by $A_1$ and $A_1'$ respectively the identity components.

We consider the $\Gamma$-graded $G$-acted $\mathcal{O}$-algebra structure on  ${A},$  that is, we have $a^g\in A_{x^g},$ for all $x\in \Gamma$ and $a_x\in A_x$.
\end{subsec}

\begin{subsec} \label{s:ddotM}  Let $M$ be an indecomposable $A_1\otimes A'_1$-module, so $M$ is an indecomposable $\mathcal{O}(N\times N')$-module associated with $b\otimes (b')^\circ$. We assume  that $M$ extends to $\Delta$ (that is, the action of $N\times N$ on $M$ extends to $\ddot{G})$, and that $M$ restricted to both $\mathcal{O}(1\times N')$ and $\mathcal{O}(N\times 1)$ is projective. Set
\[\ddot{M}:=\Ind_{\ddot G}^{G\times G'}(M).\] Then, by \cite[Lemma 1.6.3]{M}, $\ddot{M}$ is a $\Gamma$-graded $(\mathcal{O}G,(\mathcal{O}G')^{\mathrm{op}})$-bimodule with $1$-component naturally isomorphic to $M$.


Since $M$ is an indecomposable $\mathcal{O}\ddot G$-module,  we consider a vertex $\ddot{P}$ of $M$  in $\ddot G,$ and we choose an $\mathcal{O}\ddot{P}$-source $\ddot{N}$ of $M.$ Consider the $\mathcal{O}\ddot{P}$-interior algebra \[S:=\End_{\mathcal{O}}(\ddot{N}).\]

It follows in particular that $M$ is a direct summand of  $\Ind_{\ddot{P}}^{\ddot G}(\ddot{N})$ as $\mathcal{O}\ddot G$-modules,  and  $\ddot{M}$ is a direct summand of $\Ind_{\ddot{P}}^{G\times G'}(\ddot{N})$ as $\Gamma$-graded $\mathcal{O}(G\times G')$-modules, where note that the map $\ddot{P}\to G\times G'$ is not injective in general. Let $X$ be the Green correspondent of $M$, so $X$ is an indecomposable $\mathcal{O}N_{\ddot G}(\ddot{P})$-module with vertex $\ddot{P}$ such that $X\mid \Ind_{\ddot{P}}^{N_{\ddot G}(\ddot{P})}(\ddot{N}).$

Denote also by $\pi:G\times G'\to G$ the natural projection, by $\rho:\ddot{P}\to G,$ the restriction of $\pi$ to $\ddot{P}$ and by $\sigma:\ddot{P}\to P$ the surjective group homomorphism determined by $\pi$ and $\rho.$ Similarly, we set $\pi',$ $\rho'$ and $\sigma':\ddot{P}\to P'.$
\end{subsec}

For later use, we record the following lemma relating points on $A$ and points on $A_1$.

\begin{lemma} \label{point_relation} For any subgroup $H$ of $G$ and for any  point $\hat{\alpha}$ of $H$ on ${A}$ there exists a point $\alpha$  of $H$ on $A_1$ such that $\hat{i}\cdot j=j\cdot \hat{i}=\hat{i},$   for some $\hat{i}\in \hat{\alpha}$ and $j\in \alpha.$
\end{lemma}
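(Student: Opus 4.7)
The strategy is to exploit the fact that $A_1^H$ is a unital subalgebra of $A^H$: both share the identity $b$, since $b\in A_1=\mathcal{O}Nb$. The claim then becomes a purely ring-theoretic statement: for a unital inclusion of semi-perfect $\mathcal{O}$-algebras, every primitive idempotent of the larger ring is conjugate, by a unit, to one sitting underneath a primitive idempotent of the smaller ring. This is how I would phrase the content to be proved, and it suggests the standard device of refining a primitive decomposition of $1$ from the small ring to the large one.

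Concretely, I would first write $b=\sum_{k} j_k$ as a sum of pairwise orthogonal primitive idempotents $j_k$ of $A_1^H$; this is possible because $A_1^H$ is a finitely generated $\mathcal{O}$-algebra over the complete local ring $\mathcal{O}$, hence semi-perfect. Viewed in $A^H$, the $j_k$ are still orthogonal idempotents but need not be primitive. Using the semi-perfectness of $A^H$, I would refine each $j_k$ into a sum $j_k=\sum_l \hat{i}_{k,l}$ of pairwise orthogonal primitive idempotents of $A^H$, thereby obtaining a total decomposition $b=\sum_{k,l}\hat{i}_{k,l}$ of $1_A$ into primitive orthogonal idempotents of $A^H$, with the built-in relations $\hat{i}_{k,l}\cdot j_k=j_k\cdot \hat{i}_{k,l}=\hat{i}_{k,l}$ for every pair $(k,l)$.

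To conclude, I would invoke the Krull--Schmidt-type uniqueness for primitive idempotent decompositions in semi-perfect rings: any primitive idempotent of $A^H$ is $(A^H)^{\times}$-conjugate to some $\hat{i}_{k_0,l_0}$ appearing above. Thus, replacing the given representative of $\hat{\alpha}$ by an $(A^H)^{\times}$-conjugate, we may take $\hat{i}:=\hat{i}_{k_0,l_0}$; then $j:=j_{k_0}$ and its $(A_1^H)^{\times}$-conjugacy class $\alpha$ provide a point of $H$ on $A_1$ satisfying the required relations. The argument is essentially formal, so I do not anticipate any real obstacle; the only technical input is the semi-perfectness of $A^H$ and $A_1^H$, which holds automatically for finitely generated algebras over the complete discrete valuation ring $\mathcal{O}$.
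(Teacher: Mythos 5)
Your argument is correct and follows essentially the same route as the paper: both start from an orthogonal primitive decomposition of $1_A=b$ in $(A_1)^H$ and then use the conjugacy of primitive idempotents in the semiperfect algebra $A^H$ (you via refinement plus Krull--Schmidt uniqueness, the paper via the equivalent \cite[Proposition 3.19]{Puig1}) to replace the given representative of $\hat\alpha$ by one dominated by some $j$. The only cosmetic difference is that the paper fixes $\hat i'\in\hat\alpha$ first and conjugates the covering idempotent $j$, rather than refining the whole decomposition.
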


\begin{proof} Let $1_{{A}}=\sum j$ be an orthogonal primitive decomposition of the identity element of ${A}$ in $(A_1)^H.$ Let $\hat{i}'\in \hat{\alpha}.$ Since $\hat{i}'\cdot 1_{{A}}=1_{{A}}\cdot \hat{i}'=\hat{i}'$ it follows that there exists an idempotent $j$ such that $j\cdot \hat{i}'\neq 0.$   Now \cite[Proposition 3.19]{Puig1} shows that there exists $\hat{a}\in {A}^*$ such that $j^{\hat{a}}\cdot \hat{i}'=\hat{i}'\cdot j^{\hat{a}}=\hat{i}'$ in ${A}^H,$ as $\hat{i}'$ is a primitive idempotent. The statement follows by taking $\hat{i}:=(\hat{i}')^{\hat{a}^{-1}}\in \hat{\alpha}$ and $\alpha $ the point containing $j.$
\end{proof}

\section{Graded Hecke interior algebras and Morita equivalences}

\begin{subsec} Let $\ddot{M}$ be an indecomposable $\Gamma$-graded $\mathcal{O}(G\times G')$-module with $1$-component $M$ having vertex $\ddot{P}$ and source $\ddot{N}$   as in \ref{s:ddotM}.  Consider the induced algebra
\[\ddot{A}:=\Ind_{\ddot{P}}^{G\times G'}(\End_{\mathcal{O}}(\ddot{N}))\simeq \End_{\mathcal{O}}(\Ind_{\ddot{P}}^{G\times G'}(\ddot{N})),\]
and set
\[\hat{A}:=\ddot{A}^{1\times G'}\simeq \End_{\mathcal{O}(1\times G')}(\Ind_{\ddot{P}}^{G\times G'}(\ddot{N})).\]
The algebra $\hat A$ is called in \cite[Section 4]{Puig} the {\it Hecke $\mathcal{O}G$-interior algebra} associated with $G'$, $\ddot{P}$ and $\End_{\mathcal{O}}(\ddot{N})$. In our situation,
by \cite[Section 6]{CM}, $\ddot{A}$ and $\hat{A}$ are naturally $\Gamma$-graded  $\mathcal{O}(G\times G')$-interior respectively $\mathcal{O}G$-interior algebras, such that the structural map
\[\mathcal{O}G\to \hat{A}\] is a homomorphism of $\Gamma$-graded $\mathcal{O}G$-interior algebras.
\end{subsec}

\begin{subsec} \label{s:isoms} By \cite[Theorem 4.4]{Puig} there is an $\mathcal{O}G$-interior algebra isomorphism
\begin{equation} \label{1}
\hat{A}\simeq \Ind_{\rho}(\End_{\mathcal{O}}(\ddot{N})\otimes_{\mathcal{O}}\Res_{\rho'}(\mathcal{O}G')),
\end{equation}
which in our case, by \cite[Section 7]{CM}, is an isomorphism of $\Gamma$-graded algebras. As a consequence, we obtain the isomorphism
\begin{equation}
\label{2}\Ind_{\ddot{P}}^{P\times G'}(\End_{\mathcal{O}}(\ddot{N}))^{1\times G'}\simeq \Ind_{\sigma}(\End_{\mathcal{O}}(\ddot{N})\otimes_{\mathcal{O}}\Res_{\rho'}(\mathcal{O}G'))
\end{equation}
of $\Gamma$-graded $\mathcal{O}P$-interior algebras. Note that constructions made in \cite[6.7]{Puig} can be applied in our situation since $b'$ is $G'$-invariant, hence fron (\ref{1}) we obtain the isomorphism
\begin{equation}
\label{3} \hat{A}\cdot (b')^\circ\simeq \Ind_{\rho}(\End_{\mathcal{O}}(\ddot{N})\otimes_{\mathcal{O}}\Res_{\rho'}(A'))\end{equation}
of $\Gamma$-graded $\mathcal{O}G$-interior algebras.
\end{subsec}

\begin{subsec} \label{n:endom-alg}   Denote $\bar N_{G\times G'}(\ddot P)=N_{G\times G'}(\ddot P)/\ddot P$ and $\bar N_{\ddot G}(\ddot P)=N_{\ddot G}(\ddot P)/\ddot P$. Consider the $\bar N_{\ddot G}(\ddot P)$-graded endomorphism algebra
\[E:=\End_{\mathcal{O}N_{\ddot G}(\ddot{P})}(\Ind_{\ddot{P}}^{N_{\ddot G}(\ddot{P})}(\ddot{N}))^{\mathrm{op}}\]
and the $\bar N_{G\times G'}(\ddot P)$-graded endomorphism algebra
\[\ddot{E}:=\End_{\mathcal{O}N_{G\times G'}(\ddot{P})}(\Ind_{\ddot{P}}^{N_{G\times G'}(\ddot{P})}(\ddot{N}))^{\mathrm{op}}.\]
The quotient algebra $\ddot E/\mathrm{J}_{\mathrm{gr}}(\ddot E)$ modulo the graded Jacobson radical $\mathrm{J}_{\mathrm{gr}}(\ddot E)$ is a $\bar N_{G\times G'}(\ddot{P}_{\ddot{N}})$-graded crossed product denoted $\bar k_*\hat{\bar{N}}_{G\times G'}(\ddot{P}_{\ddot{N}})$, where
\[\bar k=\End_{\mathcal{O}{\ddot P}}({\ddot N})/\mathrm{J}(\End_{\mathcal{O}{\ddot P}}({\ddot N}))\] is an extension of $k$, while $E/\mathrm{J}_{\mathrm{gr}}(E)$ is naturally isomorphic to the $\bar N_{G\times G'}(\ddot{P}_{\ddot{N}})$-graded subalgebra $\bar k_*\hat{\bar{N}}_{\ddot G}(\ddot{P}_{\ddot{N}})$
\end{subsec}

\begin{subsec} \label{n:Clifford-corr} By the Clifford correspondence (see, for instance, \cite[Section 2.3.B]{M}), the indecomposable $\mathcal{O}N_{\ddot G}(\ddot{P})$-summand $X$ of $\Ind_{\ddot{P}}^{N_{\ddot G}(\ddot{P})}(\ddot{N})$  determines an indecomposable projective $E$-module, which, in turn, determines the indecomposable projective $\bar k_*\hat{\bar{N}}_{\ddot G}(\ddot{P}_{\ddot{N}})$-module $V$, similarly to \cite[6.6]{Puig}. Since by \cite[Theorem 2.3.10.d)]{M} the Clifford correspondence commutes with induction, we get that $\ddot{M}$ corresponds to
\[\Ind_{N_{\ddot G}(\ddot{P})}^{N_{G\times G'}(\ddot{P})}(X),\]
which, again by Clifford correspondence, determines a projective $\bar k_*\hat{\bar{N}}_{G\times G'}(\ddot{P}_{\ddot{N}})$-module $\ddot{V}$ such that
\[\ddot{V}\simeq \Ind_{\bar k_*{\bar{N}}_{\ddot{G}}(\ddot{P}_{\ddot{N}})}^{\bar{k}_*{\bar{N}}_{G\times G'}(\ddot{P}_{\ddot{N}})}(V).\]
\end{subsec}

\begin{subsec}  \label{gammahat} Now let $\tilde{\ddot{V}}$ be an indecomposable direct summand of $\ddot{V}$ and let $\ddot{W}$ be an isotypic component of the restriction to $\bar k_*\hat{\bar{N}}_{P\times G'}(\ddot{P}_{\ddot{N}})$ of $\mathrm{\tilde{\ddot{V}}}.$ By \cite[6.8]{Puig}, $\ddot{W}$ determines a local point $\ddot{\hat{\gamma}}$ of $P$ on the induced algebra \[\Ind_{\sigma}(\End_{\mathcal{O}}(\ddot{N})\otimes_{\mathcal{O}}\Res_{\rho'}(\mathcal{O}G')),\]
occurring in \ref{s:isoms} (\ref{2}), and a local point $\hat{\gamma}'$ of $P'$ on $\mathcal{O}G'$, such that $\ddot{\hat{\gamma}}$ is a point of $P$ on \[\Ind_{\sigma}(\End_{\mathcal{O}}(\ddot{N})\otimes_{\mathcal{O}}\Res_{\rho'}(\mathcal{O}G')_{\hat{\gamma}'}).\]
\end{subsec}

Let $\tilde{\ddot{M}}$ be the indecomposable $\mathcal{O}(G\times G')$-summand of $\ddot{M}$ that corresponds to $\tilde{\ddot{V}}.$ Since $\ddot{M}$ is associated with $b\otimes (b')^o$, so is $\tilde{\ddot{M}},$ and then we obtain the $\mathcal{O}(G\times G')$-interior algebra embeddings
\[\End_{\mathcal{O}}(\tilde{\ddot{M}})\to \End_{\mathcal{O}}(\ddot{M})\to \ddot{A},\] which induce the $\mathcal{O}G$-interior algebra embeddings
\[\End_{\mathcal{O}}(\tilde{\ddot{M}})^{1\times G'}\to \End_{\mathcal{O}}(\ddot{M})^{1\times G'}\to \hat{A}.\]
Now $\tilde{\ddot{M}}$ determines, as in \cite[6.6]{Puig}, a point $\hat{\alpha}$ of  $G$ on $\End_{\mathcal{O}}(\ddot{M})^{1\times G'}$ and on $\hat{A}$ such that $b\cdot \hat{\alpha}=\hat{\alpha}.$ As in \cite[6.8]{Puig}, we obtain that
\[(P\times G')_{\ddot{\hat{\gamma}}}\leq (G\times G')_{\hat{\alpha}},\]
and thus, by using the isomorphism \ref{s:isoms} (\ref{3}), we have that $P'_{\hat{\gamma}'}$ is a local point on $A'.$

\begin{subsec}  Consider again the $\Gamma$-graded $\mathcal{O}G$-interior algebra
\[\hat{C}:=\Ind_{\rho}(\End_{\mathcal{O}}(\ddot{N})\otimes_{\mathcal{O}}\Res_{\rho'}(A'))\]
occurring in \ref{s:isoms} (\ref{3}), whose $1$-component is
\[\hat{C}_1=(\mathcal{O}\otimes_{\mathcal{O}(1\times N')} (\End_{\mathcal{O}}(\Ind_{\ddot{P}}^{\ddot G}(\ddot{N}))\otimes_{\mathcal{O}}A_1'))^{1\times N'}.\]
\end{subsec}

\begin{proposition} \label{points_on_1_cmp} With the above notation, the points $\ddot{\hat{\gamma}}$ and $\hat{\gamma}'$ introduced in \ref{gammahat} determine the local points $\hat{\gamma}$ of $P$ on $\hat{C}_1$ and $\gamma'$ of $P$ in $A'_1$ such that $\hat{\gamma}$ is a point of $P$ on \[(\hat{C}_1)_{\gamma'}:=(\mathcal{O}\otimes_{\mathcal{O}(1\times N')} (\End_{\mathcal{O}}(\Ind_{\ddot{P}}^{\ddot G}(\ddot{N}))\otimes_{\mathcal{O}}(A_1')_{\gamma'}))^{1\times N'}.\]
\end{proposition}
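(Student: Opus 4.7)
The plan is to produce the two points $\gamma'$ and $\hat\gamma$ by two applications of Lemma \ref{point_relation}, and then to verify the required compatibility by tracking the resulting idempotent relations through the isomorphism \ref{s:isoms} (\ref{3}) and the refinement given in \ref{gammahat}, which places $\ddot{\hat{\gamma}}$ on the induced algebra with $\mathcal{O}G'$ replaced by $(\mathcal{O}G')_{\hat{\gamma}'}$.

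First I would apply Lemma \ref{point_relation} with the subgroup $P' \le G'$, the algebra $A' = \mathcal{O}G'b'$ and the point $\hat{\gamma}'$, obtaining a point $\gamma'$ of $P'$ on $A_1' = \mathcal{O}N'b'$ together with idempotents $\hat{i}' \in \hat{\gamma}'$ and $j' \in \gamma'$ such that $\hat{i}' \cdot j' = j' \cdot \hat{i}' = \hat{i}'$. Combining the two relations yields $\hat{i}' = j' \hat{i}' j'$, so applying $\Br_{P'}$ and using locality of $\hat{\gamma}'$ forces $\Br_{P'}(j') \neq 0$; since $j'$ is primitive in $(A_1')^{P'}$, this shows that $\gamma'$ is local.

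Second, via \ref{s:isoms} (\ref{3}) together with the fact that $\ddot M$ is associated with $b \otimes (b')^\circ$ (so the construction of $\ddot{\hat{\gamma}}$ factors through $A'$), the local point $\ddot{\hat{\gamma}}$ determines a point of $P$ on the $\Gamma$-graded $\mathcal{O}G$-interior algebra $\hat{C}$. The proof of Lemma \ref{point_relation} now applies verbatim to $\hat{C}$ — it uses only an orthogonal primitive decomposition of $1_{\hat{C}}$ inside $(\hat{C}_1)^P$ together with a conjugation argument internal to $\hat{C}^P$ — and produces the local point $\hat{\gamma}$ of $P$ on $\hat{C}_1$, with idempotents $\hat{i} \in \hat{\gamma}$ and a representative $j$ of the lift of $\ddot{\hat{\gamma}}$ on $\hat{C}$ satisfying $\hat{i} \cdot j = j \cdot \hat{i} = \hat{i}$.

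For the compatibility, recall from \ref{gammahat} that $\ddot{\hat{\gamma}}$ is in fact a point on the subalgebra $\Ind_\sigma(\End_{\mathcal{O}}(\ddot N) \otimes_{\mathcal{O}} \Res_{\rho'}(\mathcal{O}G')_{\hat{\gamma}'})$. Propagating the idempotent $j'$ and the relation $\hat{i}' = j' \hat{i}' j'$ through the tensor product, through the induction $\Ind_\rho$, and finally through the projection onto the $1$-component, one sees that the image of $j'$ is precisely the defining idempotent of $(\hat{C}_1)_{\gamma'}$ and that it satisfies the same type of idempotent relation with $\hat{i}$ as $j$ does. Hence $\hat{i}$ lies inside $(\hat{C}_1)_{\gamma'}$, so $\hat{\gamma}$ is a point of $P$ on $(\hat{C}_1)_{\gamma'}$. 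The main obstacle is exactly this last tracking step: carefully pushing $j'$ through the isomorphism \ref{s:isoms} (\ref{3}) and identifying its image in $\hat{C}_1$ as the defining idempotent of $(\hat{C}_1)_{\gamma'}$, so that the idempotent relations from the two applications of Lemma \ref{point_relation} combine to yield the required inclusion $\hat{\gamma} \subseteq (\hat{C}_1)_{\gamma'}$.
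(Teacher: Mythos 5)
Your overall strategy (two applications of Lemma \ref{point_relation}, one on the $G'$-side and one on the $G$-side) is the same as the paper's, and your first step, including the Brauer-map argument for the locality of $\gamma'$, is fine. The divergence is in the second step, and it is exactly where you locate ``the main obstacle'': you apply Lemma \ref{point_relation} to the full algebra $\hat{C}$ and then try to prove separately that the resulting point $\hat{\gamma}$ lands inside $(\hat{C}_1)_{\gamma'}$ by pushing $j'$ through the isomorphism \ref{s:isoms} (\ref{3}). The paper sidesteps this entirely: using the relation $\hat{i}'=j'\hat{i}'j'$ it first transports $\ddot{\hat{\gamma}}$ to a point of $P$ on the $\Gamma$-graded algebra $\Ind_{\rho}(\End_{\mathcal{O}}(\ddot{N})\otimes_{\mathcal{O}}\Res_{\sigma'}(A'_{\gamma'}))$, observes that the identity component of \emph{this} algebra is precisely $(\hat{C}_1)_{\gamma'}$, and only then applies Lemma \ref{point_relation}. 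The point $\hat{\gamma}$ is thereby produced directly as a point of $P$ on $(\hat{C}_1)_{\gamma'}$, and the required containment is automatic rather than something to be verified afterwards.

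As written, your verification of the containment does not go through. First, you have reversed the absorption relation: with $\hat{i}\in\hat{\gamma}\subseteq(\hat{C}_1)^P$ and $j$ a representative of the point on the graded algebra $\hat{C}$, Lemma \ref{point_relation} yields $j=\hat{i}\,j\,\hat{i}$ (the idempotent on the graded algebra is absorbed by the one on the identity component), not $\hat{i}=j\,\hat{i}\,j$ as you state. Your conclusion ``hence $\hat{i}$ lies inside $(\hat{C}_1)_{\gamma'}$'' is deduced from the reversed relation: if $e$ denotes the defining idempotent of the cut $(\hat{C}_1)_{\gamma'}$, then $ej=je=j$ together with your (incorrect) $\hat{i}=j\hat{i}j$ would give $e\hat{i}=\hat{i}e=\hat{i}$, but the correct relation $j=\hat{i}j\hat{i}$ gives no such thing. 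To repair your route you would need an extra conjugation argument (essentially rerunning the proof of Lemma \ref{point_relation} inside the corner $e\hat{C}e$) to replace $\hat{i}$ by a conjugate lying in $e(\hat{C}_1)^Pe$. Rather than patching this, adopt the paper's ordering: cut by $\gamma'$ first, identify the identity component of the cut algebra as $(\hat{C}_1)_{\gamma'}$, and apply Lemma \ref{point_relation} there.
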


\begin{proof} It follows from \cite[6.8]{Puig} that $\ddot{\hat{\gamma}}$ is a point of $P$ on
\[\Ind_{\sigma}(\End_{\mathcal{O}}(\ddot{N})\otimes_{\mathcal{O}}\Res_{\sigma'}(A'_{\hat{\gamma}'})).\]
Since $A'$ is $\Gamma$-graded, Lemma \ref{point_relation} gives a point $\gamma'$ of $P'$ on $A'_1$  such that $\hat{i}'\cdot i'=\hat{i}'$ for some idempotents chosen as above. Now clearly $\ddot{\hat{\gamma}}$ becomes a point of $P$ on
\[\Ind_{\rho}(\End_{\mathcal{O}}(\ddot{N})\otimes_{\mathcal{O}}\Res_{\sigma'}(A'_{\gamma'}))\]
via the $\mathcal{O}P$-interior algebra embedding
\[\Ind_{\sigma}(\End_{\mathcal{O}}(\ddot{N})\otimes_{\mathcal{O}}\Res_{\sigma'}(A'_{\gamma'}))\to \Ind_{\rho}(\End_{\mathcal{O}}(\ddot{N})\otimes_{\mathcal{O}}\Res_{\sigma'}(A'_{\gamma'})).\]
This last algebra is again $\Gamma$-graded, having as identity component the $P$-algebra
$(\hat{C}_1)_{\gamma'}.$
The point $\hat{\gamma}$ is determined by applying again Lemma \ref{point_relation}.
\end{proof}

\begin{subsec} Recall that a pointed group $P_\gamma$ on $A_1$ determines, as in \cite[1.16]{FP}, the simple quotient $A_1(P_\gamma)$ of $A_1^P$, and  a field extension $\hat k$ of $k$ and a crossed product (or a twisted $\hat k^*$-group algebra) $\hat k_*\hat{\bar N}_G(P_\gamma)$, such that the simple $A_1(P_\gamma)$-module becomes a $\hat k_*\hat{\bar N}_G(P_\gamma)$-module denoted by $V_{A_1}(P_\gamma)$, and called the {\it multiplicity module} of $P_\gamma$. 

When we apply this idea to the $\mathcal{O}\ddot{G}$-module $M$ and the $\mathcal{O}\ddot{G}$-interior algebra $\End_{\mathcal{O}}(M)$, then the pointed group corresponding to the source $\ddot{N}$ will be denoted by $\ddot{P}_{\ddot{N}}$, and the multiplicity $\bar k_* \hat{{\bar N}}_{\ddot{G}}(\ddot{P})$-module will be denoted by $V_{M}(\ddot{P}_{\ddot{N}})$.

With the above notations, we may state the following group graded version of \cite[Theorem 6.9]{Puig}. Note again that we only consider here Morita equivalences (and not Morita stable equivalences), so our result takes the simpler form mentioned at the end of the statement of \cite[Theorem 6.9]{Puig}.
\end{subsec}

\begin{theorem} \label{main}  The $(\mathcal{O}G,\mathcal{O}G')$-bimodule $\ddot{M}$ induces a $\Gamma$-graded Morita equivalence between $A$ and $A'$  if and only if   $P$ is a defect group of $b$ regarded as a primitive idempotent of  $(A_1)^G,$  $P'$ is a defect group of $b'$ regarded as a primitive idempotent of  $(A_1')^{G'},$ and for a suitable local point $\gamma$ of $P$ on $A_1$, there is a $\Gamma$-graded $\mathcal{O}P$-interior algebra isomorphism
\[e:A_{\gamma}\longrightarrow (\Ind_{\sigma}(\End_{\mathcal{O}}(\ddot{N})\otimes_{\mathcal{O}}\Res_{\sigma'}(A'_{\gamma'})))_{\hat{\gamma}}\]
{\rm(}where  $\hat\gamma$ and $\gamma'$ are defined in Proposition \ref{points_on_1_cmp}{\rm)} such that we have an  isomorphism
\[V_{A_1}(P_{\gamma})\simeq \Res_{\widehat{(\Ind_P^G e)_1(P_{\gamma})}}(V_{M}((P\times G'))_{\hat{\gamma}}))\]
of $\hat k_*\hat{\bar{N}}_{G}(P_{\gamma})$-modules, where
\[\widehat{(\Ind_P^G e)_1(P_{\gamma})}: \hat k_*\hat{\bar N}_G(P_\gamma) \to \hat k_*\hat{\bar N}_G(P_{\hat\gamma})\simeq \hat k_*\hat{\bar N}_{G\times G'}((P\times G')_{\hat\gamma})\]
is the isomorphism of twisted group algebras induced by $e$.
\end{theorem}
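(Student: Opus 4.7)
The plan is to reduce the theorem to Puig's ungraded result \cite[Theorem 6.9]{Puig} applied to the $1$-components $M$, $A_1$, $A'_1$, and then to upgrade the conclusion to the $\Gamma$-graded setting using the graded structure on the Hecke interior algebras established in \ref{s:isoms} and developed in \cite[Sections 6--7]{CM}.

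For the necessity direction, assume that $\ddot{M}$ induces a $\Gamma$-graded Morita equivalence between $A$ and $A'$. Restricting the grading to the trivial component, the $1$-component $M$ induces an ordinary Morita equivalence between $A_1=\mathcal{O}Nb$ and $A'_1=\mathcal{O}N'b'$. Applying \cite[Theorem 6.9]{Puig} directly to $M$ then yields that $P$ and $P'$ are defect groups in the asserted sense, produces a local point $\gamma$ of $P$ on $A_1$, and delivers an ungraded $\mathcal{O}P$-interior algebra isomorphism
\[
e_1:(A_1)_\gamma\longrightarrow \bigl(\Ind_\sigma(\End_{\mathcal{O}}(\ddot N)\otimes_{\mathcal{O}}\Res_{\sigma'}((A'_1)_{\gamma'}))\bigr)_{\hat\gamma}
\]
together with the multiplicity module isomorphism at the level of $1$-components, where $\gamma'$ and $\hat\gamma$ agree with the points produced in Proposition \ref{points_on_1_cmp}. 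To promote $e_1$ to a $\Gamma$-graded isomorphism $e$, note that both sides naturally carry $\Gamma$-gradings whose $1$-components are the domain and target of $e_1$: the source $A_\gamma$ via the grading inherited from $A$, and the target via \ref{s:isoms} (\ref{2}) and (\ref{3}). The $\Gamma$-graded $\mathcal{O}(G\times G')$-interior algebra embeddings
\[
\End_{\mathcal{O}}(\tilde{\ddot M})^{1\times G'}\longrightarrow \End_{\mathcal{O}}(\ddot M)^{1\times G'}\longrightarrow \hat A\cdot (b')^\circ,
\]
introduced before Proposition \ref{points_on_1_cmp}, combined with the graded isomorphism \ref{s:isoms} (\ref{3}), identify $A_\gamma$ as a graded summand of the target and yield a $\Gamma$-graded isomorphism $e$ whose $1$-component is $e_1$. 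The multiplicity module isomorphism is preserved because the Clifford-correspondence constructions of \ref{n:endom-alg}--\ref{gammahat} already take place at the graded level.

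For the sufficiency direction, assume the three conditions on the right-hand side. Restricting the graded isomorphism $e$ to the $1$-component yields an ungraded isomorphism $e_1$, which together with the multiplicity module condition matches the hypotheses of \cite[Theorem 6.9]{Puig}; that theorem then asserts that $M$ induces an ordinary Morita equivalence between $A_1$ and $A'_1$. The standard fact that a $\Gamma$-graded bimodule between $\Gamma$-graded algebras induces a $\Gamma$-graded Morita equivalence as soon as its $1$-component does (cf. \cite[Theorem 5.1.2]{M}) then ensures that $\ddot M=\Ind_{\ddot G}^{G\times G'}(M)$ induces a $\Gamma$-graded Morita equivalence between $A$ and $A'$. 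The main obstacle will be the careful bookkeeping of the $\Gamma$-gradings through the chain of embeddings and isomorphisms of induced algebras in \ref{s:isoms}, and through the Clifford-theoretic constructions of \ref{n:endom-alg}--\ref{gammahat}; this relies crucially on the results of \cite[Sections 6--7]{CM}, which guarantee that Puig's isomorphism \cite[Theorem 4.4]{Puig} respects the natural $\Gamma$-gradings on both sides and that Clifford correspondence is compatible with the graded structure.
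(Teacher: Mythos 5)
There is a genuine gap in your reduction strategy. You propose to apply \cite[Theorem 6.9]{Puig} to $M$ viewed as an $\mathcal{O}(N\times N')$-module inducing a Morita equivalence between $A_1=\mathcal{O}Nb$ and $A_1'=\mathcal{O}N'b'$. But that ungraded theorem, applied with $N,N'$ in place of $G,G'$, controls the vertex of $M$ inside $N\times N'$ and produces a defect group of $b$ \emph{as a block of $\mathcal{O}N$}. The theorem you are proving asserts something stronger and different: $P=\sigma(\ddot P)$, where $\ddot P$ is a vertex of $M$ in the larger group $\ddot G$, and $P$ is claimed to be a defect group of $b$ regarded as a primitive idempotent of $(A_1)^G$ --- i.e.\ a defect group relative to the $G$-action on $\mathcal{O}Nb$, which in general strictly contains a defect group of $b$ in $N$ (they differ roughly by a Sylow $p$-subgroup of $\Gamma$). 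Likewise $\gamma$ is a point of this $P$ on $A_1$, so $(A_1)_\gamma$ is not the source algebra of the block $b$ of $\mathcal{O}N$, and the multiplicity module lives over $\hat k_*\hat{\bar N}_G(P_\gamma)$, not $\hat k_*\hat{\bar N}_N(\cdot)$. Consequently neither direction of your argument matches the hypotheses or conclusions of the ungraded theorem for the pair $(N,N')$, and the "upgrade to the graded setting" cannot repair this, since the discrepancy is not about gradings but about which group the local invariants are computed in.

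The paper's proof addresses exactly this point. For necessity it passes through the identifications $A_1\simeq\End_{\mathcal O}(M)^{1\times N'}$ and $(A_1)^G\simeq(A_1)^{\ddot G}\simeq\End_{\mathcal O}(M)^{\ddot G}$, compares $\ddot P$ with $D\times D'$ where $D$ is a defect group of $b$ in $G$, analyses the blocks $B$ of $\mathcal{O}Gb$ arising from indecomposable summands of $\ddot M$ together with \cite[Proposition 5.3]{Puig}, and invokes the covering-theory fact that some block of $G$ covering $b$ has defect group $D$; only then does Lemma \ref{point_relation} and Proposition \ref{points_on_1_cmp} yield $\gamma$ and the graded isomorphism $e$. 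For sufficiency, one cannot simply quote an ungraded Morita theorem: the paper uses the relative projectivity $b\in\Tr_P^G(A^P\gamma A^P)$ and \cite[Proposition 3.6]{Puig2} to build a graded embedding $A\to\Ind_P^G(A_\gamma)\to\hat B\to\hat A$, then uses the multiplicity-module hypothesis to identify the resulting point with the one coming from $\End_{\mathcal O}(M)^{1\times N'}$, and shows by an explicit retraction argument that the structural map $A_1\to\End_{\mathcal O}(M)^{1\times N'}$ is an isomorphism; only at the very end does \cite[Theorem 5.1.2]{M} enter, as in your sketch. You would need to supply these intermediate steps; as written, the core of the proof is missing.
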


\begin{proof}  By our assumptions,  the $\Gamma$-graded Morita equivalence determines the $\Gamma$-graded $\mathcal{O}G$-interior algebra isomorphism
\begin{equation}
\label{i}A\simeq \End_{\mathcal{O}}(\ddot{M})^{1\times G'},
\end{equation}
given by the structural map. We have that $N\times 1$, $1\times N'$ and $N\times N'$ are normal subgroups of ${\ddot G},$ so  the identity component of the above isomorphism restricts to the $\ddot G$-algebra isomorphism
\[A_1\simeq \End_{\mathcal{O}}(M)^{1\times N'},\] given by the structural map, as in \cite[Theorem 6.5]{Puig}. Therefore, we have
\[A_1^{\ddot G}\simeq \End_{\mathcal{O}}(M)^{\ddot G}.\] Note that the action of $\ddot G$ on $A_1$ coincides with the action of $G$ on $A_1,$ forcing $(A_1)^G\simeq (A_1)^{\ddot G}.$

Let $D$ be a defect group in $G$ of $b.$ If $D'$ is the subgroup of $G'$ such that $\omega(D)=\omega'(D')$ (so $\omega$ and $\omega'$ induce the isomorphism $DN/N\simeq D'N'/N'$), then
\[(A_1)^G_D\simeq (A_1)^{\ddot G}_{D\times D'}\simeq \End_{\mathcal{O}}(M)^{\ddot G}_{D\times D'}.\]
We deduce that for a suitable pair $(x,x')\in {\ddot G}$, we have $\ddot P^{(x,x')}\leq D\times D',$ that is, $P^x\leq D.$

Now let $\bar{\ddot{M}}$ be an indecomposable $\mathcal{O}(G\times G')$-summand of $\ddot{M}$, having vertex $\ddot{Q}$ and source $\bar{\ddot{N}}$, and let  $Q$ and $Q'$ the projections of $\ddot{Q}$ in $G$ and $G'$ respectively. Then $\bar{\ddot{M}}$ determines a block $B$ of $A$ such that by isomorphism (\ref{i}) we obtain the isomorphism
\[A\cdot B\simeq \End_{\mathcal{O}}(\bar{\ddot{M}})^{1\times G'}\]
of $\mathcal{O}G$-interior algebras.  Moreover, the block $B$ determines a point $\tilde{\alpha}$ of $G$ on
\[\Ind_{\ddot{Q}}^{G\times G'}(\End_{\mathcal{O}}(\bar{\ddot{N}}))^{1\times G'}\] such that the $\mathcal{O}G$-interior algebra embedding
\[ \End_{\mathcal{O}}(\bar{\ddot{M}})^{1\times G'}\to \Ind_{\ddot{Q}}^{G\times G'}(\End_{\mathcal{O}}(\bar{\ddot{N}}))^{1\times G'}\] determines the $\mathcal{O}G$-interior algebra isomorphism
\[A\cdot B\simeq (\Ind_{\ddot{Q}}^{G\times G'}(\End_{\mathcal{O}}(\bar{\ddot{N}}))^{1\times G'})_{\tilde{\alpha}}.\] Using this last isomorphism and \cite[Proposition 5.3]{Puig}, we deduce that $Q$ is actually a defect group of $B.$ On the other hand, we have $\bar{\ddot{M}}\mid \ddot{M}\mid \Ind_{\ddot{P}}^{G\times G'}(\ddot{N})$, and thus we may choose $Q$ to be a subgroup of $P.$ So far we have obtained the inclusions $Q^x\leq P^x\leq D$ for some $x\in G.$

By changing the choice of $\bar{\ddot{M}}$, we get, as above, all the blocks $B$ of $G$ that cover $b.$ It is known (see \cite[IV.15, Theorem 1]{Alp}) that there is at least one block that covers $b$ and has defect group $D$. If that  is the case for $B$, then we deduce that $P^x=D$ for some $x\in G.$ Hence $P$ is a defect group of $b$ in $G.$ By symmetry, we get that $P'$ is a defect group of $b'$ in $G'.$

Let $\bar{B}$ be the block of $A$  corresponding to $\hat{\alpha}$ via (\ref{i}). Then we have the isomorphisms
\[\bar{B}A\simeq (\End_{\mathcal{O}}(\bar{\ddot{M}})^{1\times G'})_{\hat{\alpha}}\simeq \hat{A}_{\hat{\alpha}},\]
of $\mathcal{O}G$-interior algebras. Clearly, the defect pointed group $P_{\ddot{\hat{\gamma}}}$ of $G_{\hat{\alpha}}$ determines a defect pointed group $P_{\bar{\gamma}}$  of $G_{\bar{B}}$, and $\bar{\gamma}$ is still a local point of $P$ on $A$ via the embedding $\bar{B}A\to A.$ Again Lemma \ref{point_relation} gives a point $\gamma$ of $P$ on $A_1,$ which is also local since $\bar{\gamma}$ is. It is now easy to see that $P_{\gamma}$ is a defect pointed group of $G_{\{b\}}.$

The $\Gamma$-graded $\mathcal{O}G$-interior algebra embedding
\begin{equation}
\label{4}A\simeq \End_{\mathcal{O}}(\ddot{M})^{1\times G'}\to \hat{A}\cdot (b')^{o}\simeq \Ind_{\rho}(\End_{\mathcal{O}}(\ddot{N})\otimes_{\mathcal{O}}\Res_{\rho'}(A'))\end{equation}
restricts to a $G$-algebra embedding
$A_1\to \hat{C}_1,$ so that the correspondence between $\bar{\gamma}$ and $\ddot{\hat{\gamma}}$ determines the correspondence between $\gamma$ and $\hat{\gamma}.$ Proposition \ref{points_on_1_cmp} gives the $\mathcal{O}P$-interior algebra isomorphism
\begin{equation}
\label{5}(A_1)_{\gamma}\simeq ((\hat{C}_1)_{\gamma'})_{\hat{\gamma}}\simeq (\Ind_{\sigma}(\End_{\mathcal{O}}(\ddot{N})\otimes_{\mathcal{O}}\Res_{\sigma'}((A'_1)_{\gamma'})))_{\hat{\gamma}}.\end{equation}
Recall that the embedding of $\mathcal{O}P$-interior algebras

\[(\Ind_{\ddot{P}}^{P\times G'}(\End_{\mathcal{O}}(\ddot{N}))^{1\times G'})\cdot (b')^o\to \hat{A}\cdot (b')^o\] is grade-preserving, while
\[\Ind_{\ddot{P}}^{P\times G'}(\End_{\mathcal{O}}(\ddot{N}))^{1\times G'}\cdot (b')^{o}\simeq \Ind_{\sigma}(\End_{\mathcal{O}}(\ddot{N})\otimes_{\mathcal{O}}(\Res_{\rho'}(A')))\]
is an isomorphism of $\Gamma$-graded $\mathcal{O}P$-interior algebras, and
\[\Ind_{\ddot{P}}^{G\times G'}(\End_{\mathcal{O}}(\ddot{N}))^{1\times G'}\cdot (b')^o\simeq \Ind_{\rho}(\End_{\mathcal{O}}(\ddot{N})\otimes_{\mathcal{O}}(\Res_{\rho'}(A')))\]
is an isomorphism of $\Gamma$-graded $\mathcal{O}G$-interior algebras. We obtain the $\Gamma$-graded $\mathcal{O}P$-interior algebra homomorphism
\[(\Ind_{\sigma}(\End_{\mathcal{O}}(\ddot{N})\otimes_{\mathcal{O}}\Res_{\sigma'}((A')_{\gamma'})))_{\hat{\gamma}}\to (\Ind_{\rho}(\End_{\mathcal{O}}(\ddot{N})\otimes_{\mathcal{O}}\Res_{\sigma'}((A')_{\gamma'})))_{\hat{\gamma}},\] which is an isomorphism by  the second part of isomorphism (\ref{5}). Now the homomorphism (\ref{4}) restricts to the $\Gamma$-graded $\mathcal{O}P$-interior algebra homomorphism
\[A_{\gamma}\to (\Ind_{\sigma}(\End_{\mathcal{O}}(\ddot{N})\otimes_{\mathcal{O}}\Res_{\sigma'}((A')_{\gamma'})))_{\hat{\gamma}},\] and  (\ref{5}) shows that this is in fact an isomorphism.

Conversely, let us first denote
\[\hat{B}:=(\Ind_{\sigma}(\End_{\mathcal{O}}(\ddot{N})\otimes_{\mathcal{O}}\Res_{\sigma'}((A')_{\gamma'})))_{\hat{\gamma}},\]
and we have seen that $\hat{B}$ is a $\Gamma$-graded $\mathcal{O}P$-interior algebra. Further, since $G_{\{b\}}$ is projective relative to $P_{\gamma}$ on $A_1$, we still have $b\in \Tr_P^G(A^P\gamma A^P).$ It follows, according to \cite[Proposition 3.6]{Puig2}, that there exists a $\Gamma$-graded $\mathcal{O}G$-interior algebra embedding
\[h:A\to \Ind_P^G(A_{\gamma}),\] sending $a\in A$ to $$\Tr_P^G(1\otimes i\otimes 1)\cdot (1\otimes iai\otimes 1)\cdot \Tr_P^G(1\otimes i\otimes 1),$$ where $i\in \gamma.$

By our assumptions we obtain the $\Gamma$-graded $\mathcal{O}G$-interior algebra isomorphism
\[g:\Ind_P^G(A_{\gamma})\to \hat{B},\] and then the obvious $\Gamma$-graded $\mathcal{O}G$-interior algebra embedding
\[\hat{B}\to \Ind_P^G(\Ind_{\sigma}(\End_{\mathcal{O}}(\ddot{N})\otimes_{\mathcal{O}}\Res_{\rho'}(A')))\simeq \hat{A}.\]
Clearly, we have the $\Gamma$-graded $\mathcal{O}G$-interior algebra embedding
\[\phi:A\to \hat{A},\] given by the composition of the above maps.  Then $\phi_1:A_1\to \hat{A}_1$ is an embedding of $G$-algebras.
It is also clear that $\phi_1(\gamma)=\hat{\gamma}$, and then $b$ determines a point $\hat{\beta}$ of $G$ on $\hat{A}_1,$ which is already a point of $G$ on $\hat{B}_1,$ with defect group $P_{\hat{\gamma}}$, so that we have the isomorphism
\begin{equation}
\label{6}\hat{\phi}(P_{\gamma}):\hat k_*\hat{\bar{N}}_{G}(P_{\gamma})\simeq \hat k_*\hat{\bar{N}}_{G}(P_{\hat{\gamma}}) \end{equation}
of ${\bar{N}}_{G}(P_{\gamma})$-graded algebras, and the isomorphisms
\begin{align*} V_{A_1}(P_{\gamma})  & \simeq \Res_{\hat{\phi}(P_{\gamma})}(V_{(\hat{A}_1)_{\hat{\beta}}}(P_{\hat{\gamma}})) \\
  & \simeq \Res_{\hat{\phi}(P_{\gamma})}(V_{(\hat{B}_1)_{\hat{\beta}}}(P_{\hat{\gamma}})).\end{align*}
Let $\hat{\alpha}$ be the point of $G$ on $\hat{A}_1$ such that $b\cdot \hat{\alpha}=\hat{\alpha}$, obtained via the $G$-algebra embedding
\begin{equation}
\label{7}\End_{\mathcal{O}}(M)^{1\times N'}\to \hat{A}_1.\end{equation}
 Using this and our assumption, we have the $\hat k_*{\hat{\bar{N}}}_G(P_{\gamma})$-module isomorphisms
\begin{align*} V_{A_1}(P_{\gamma}) & \simeq \Res_{\hat{\phi}(P_{\gamma})}(V_{M}((P\times N')_{\hat{\gamma}})) \\
          & \simeq \Res_{\hat{\phi}(P_{\gamma})}(V_{(\hat{A}_1)_{\hat{\alpha}}}(P_{\hat{\gamma}})).\end{align*}
Hence, by (\ref{6}) and (\ref{7}) we can identify $\hat{\alpha}$ with $\hat{\beta}$ and the structural morphisms $f:A_1\to (\hat{A}_1)_{\hat{\alpha}}$ with the restriction $A_1\to (\hat{B}_1)_{\hat{\beta}},$ of $\phi_1$.

Since $g_1\circ h_1$ is an embedding, there is a homomorphism of $(\mathcal{O}N,\mathcal{O}N)$-bimodules
\[r:(\hat{B}_1)_{\hat{\beta}}:=j(\hat{B}_1)j\to A_1,\]
which is also  $\mathcal{O}G$-linear, such that $r\circ g_1\circ h_1=\mathrm{id}_{A_1},$ where $g_1(h_1(b))=j.$ Now, for any $x\in N$, we get
\[xb=x(r (g_1(h_1(b)))=xr(j)=r(f(xb)),\] hence $f$ is an isomorphism.

Finally, we obtain the $G$-algebra isomorphism
\[A_1\simeq \End_{\mathcal{O}}(M)^{1\times N'}.\]
Now, by using \cite[Theorem 6.5]{Puig},  the fact that $M$ extends to $\ddot G$, and \cite[Theorem 5.1.2]{M}, the required statement follows.
\end{proof}

\begin{subsec}\label{sufficiency_for_morita_eq} Similarly to \cite[6.12]{Puig}, we show how by applying Theorem \ref{main}, we obtain all the possible choices of an indecomposable $\mathcal{O}\ddot{G}$-module inducing a Morita equivalence between $A_1$ and $A'_1$, and whose induction to $G\times G'$ induces a $\Gamma$-graded Morita equivalence between $A$ and $A'.$

Let $b$ and $b'$ be as in \ref{assumption_on_M}, and assume that $Q\leq G$ is a defect group of $b$ in $G$, while $Q'$ is a defect group of $b'$ in $G'$, such that $\ddot{Q}\leq \ddot G,$ where $\ddot{Q}$ runs through all the subgroups of $Q\times Q'$ such that the projections $\tau$ and $\tau'$ satisfy $\tau(\ddot{Q})=Q$ and $\tau'(\ddot{Q})=Q'.$

We consider the defect pointed group $Q_{\delta}$ of $b$ and the defect pointed group $Q'_{\delta'}$ of $b'$, and we look for all indecomposable $\mathcal{O}\ddot{Q}$-modules $\ddot{L}$ having vertex $\ddot{Q},$ such that the restrictions to $\Ker(\tau)$ and to $\Ker(\tau')$ are projective,  and there is a $\Gamma$-graded $\mathcal{O}Q$-interior algebra embedding
\[A_{\delta}\to \Ind_{\tau}(\End_{\mathcal{O}}(\ddot{L})\otimes_{\mathcal{O}}(\Res_{\tau'}(A'_{\delta'}))).\]

With these assumptions, denoting by $\xi'$ the restriction to $\ddot{Q}$ of the projection $G\times G'\to G'$,  note that there is the composition
\begin{align*}
f:A_{\delta}&\to \Ind_{\tau}(\End_{\mathcal{O}}(\ddot{L})\otimes_{\mathcal{O}}(\Res_{\tau'}(A'_{\delta'})))\\
           &\to \Ind_{\tau}(\End_{\mathcal{O}}(\ddot{L})\otimes_{\mathcal{O}}(\Res_{\xi'}(\mathcal{O}G')))\\
           &\to \Ind_{\ddot{Q}}^{Q\times G'}(\End_{\mathcal{O}}(\ddot{L}))^{1\times G'} \\
           &\to \End_{\mathcal{O}}(\Ind_{\ddot{Q}}^{G\times G'}(\ddot{L}))^{1\times G'}
\end{align*}
of  $\Gamma$-graded $\mathcal{O}Q$-interior algebra embeddings, that restricts to the embedding
\[f_1:(A_1)_{\delta}\to \End_{\mathcal{O}}(\Ind_{\ddot{Q}}^{\ddot G}(\ddot{L}))^{1\times N'}\]
of $Q$-algebras between the identity components.

Now $\hat{\delta}:=f_1(\delta)$ is a local point of $Q$ on $\End_{\mathcal{O}}(\Ind_{\ddot{Q}}^{\ddot G}(\ddot{L}))^{1\times N'},$ and at the same time, a point of $Q\times N'$ on $\End_{\mathcal{O}}(\Ind_{\ddot{Q}}^{\ddot G}(\ddot{L}))$, or equivalently, a point of $Q\times G'$ on $\End_{\mathcal{O}}(\Ind_{\ddot{Q}}^{G\times G'}(\ddot{L}))_1.$ Let $\bar{\delta}$ be a local point of $Q$ on $A_{\delta}.$ According to \cite[6.13]{Puig}, $\bar{\delta}$ determines a unique local pointed group $Q_{\ddot{\hat{\delta}}}$ on $\End_{\mathcal{O}}(\Ind_{\ddot{Q}}^{G\times G'}(\ddot{L}))^{1\times G'}.$ The point $\ddot{\hat{\delta}}$ is actually a point of $Q\times G'$ on  $(\End_{\mathcal{O}}(\Ind_{\ddot{Q}}^{G\times G'}(\ddot{L})))_{\hat{\delta}}$ satisfying
\[\ddot{Q}_{\ddot{L}}\leq (Q\times G')_{\ddot{\hat{\delta}}}.\]

Let $\ddot{\hat{\beta}}$ be a point of $G\times G'$ on $\End_{\mathcal{O}}(\Ind_{\ddot{Q}}^{G\times G'}(\ddot{L}))$ such that
\[(Q\times G')_{\ddot{\hat{\delta}}}\leq (G\times G')_{\ddot{\hat{\beta}}}.\]
There is a unique indecomposable $\mathcal{O}\ddot G$-summand $Y$  of $\Ind_{\ddot{Q}}^{\ddot G}(\ddot{L})$ such that $\ddot{\hat{\beta}}$ corresponds to  a unique isomorphisms class of indecomposable $\mathcal{O}(G\times G')$-modules determined by a direct summand of $\Ind_{\ddot G}^{G\times G'}(Y).$ Explicitly, we still have the inclusion
\[(Q\times G')_{\ddot{\hat{\delta}}}\leq (G\times G')_{\ddot{\hat{\beta}}}\]
on the $\Gamma$-graded $\mathcal{O}(G\times G')$-interior algebra  $\End_{\mathcal{O}}(\Ind_{\ddot G}^{G\times G'}(Y)).$ Hence $\ddot{\hat{\delta}}$ is a point of $Q\times G'$ on  $(\End_{\mathcal{O}}(\Ind_{\ddot G}^{G\times G'}(Y)))_{\hat{\delta}}.$

According to Lemma \ref{point_relation}, $\ddot{\hat{\beta}}$ determines a   point $\hat{\beta}$ of $G\times G'$ on the identity component $\End_{\mathcal{O}}(\Ind_{\ddot G}^{G\times G'}(Y))_1$ of $\End_{\mathcal{O}}(\Ind_{\ddot G}^{G\times G'}(Y)).$ We clearly have the inclusion
\[(Q\times G')_{\hat{\delta}}\leq (G\times G')_{\hat{\beta}}\]
of pointed groups on $\End_{\mathcal{O}}(\Ind_{\ddot G}^{G\times G'}(Y))_1.$ The $\mathcal{O}\ddot{Q}$-interior algebra embedding
\[\End_{\mathcal{O}}(\ddot{L})\to \End_{\mathcal{O}}(\Ind_{\ddot{Q}}^{\ddot G}(\ddot{L}))\]
shows that $\ddot{L}$ lies in the restriction to $\ddot{Q}$ of a unique $\mathcal{O}{\ddot G}$-indecomposable direct summand of $\Ind_{\ddot{Q}}^{\ddot G}(\ddot{L}).$ Taking into account that
\[\ddot{Q}_{\ddot{L}}\leq (G\times G')_{\ddot{\hat{\beta}}},\]
we deduce that $\ddot{L}\mid \Res_{\ddot{Q}}^{\ddot G}(Y),$ and then $\ddot{Q}_{\ddot{L}}$ is still a local pointed group on  $\End_{\mathcal{O}}(\Ind_{\ddot G}^{G\times G'}(Y)),$ hence on $\End_{\mathcal{O}}(\Ind_{\ddot G}^{G\times G'}(Y))_1,$ where we have the inclusions
\[\ddot{Q}_{\ddot{L}}\leq(Q\times G')_{\hat{\delta}}\leq (G\times G')_{\hat{\beta}}.\]

Now, the natural embedding of $\mathcal{O}\ddot G$-interior algebras
\[\End_{\mathcal{O}}(Y)\to \End_{\mathcal{O}}(\Ind_{\ddot{Q}}^{\ddot G}(\ddot{L}))\]
and \cite[2.11.3]{Puig}, since $\ddot{Q}_{\ddot{L}}\leq {\ddot G}_Y,$  prove that $\ddot{Q}_{\ddot{L}}$ is a defect pointed group of ${\ddot G}_Y,$ hence $\ddot{Q}$ is a vertex of $Y.$ Let $\ddot{R}_{\hat{\epsilon}}$ be a local pointed group on $\End_{\mathcal{O}}(\Ind_{\ddot{Q}}^{G\times G'}(\ddot{L}))_1$ such that
\[\ddot{Q}_{\ddot{L}}\leq \ddot{R}_{\hat{\epsilon}}\leq (Q\times G')_{\hat{\delta}}.\] Then $\ddot{R}_{\hat{\epsilon}}$ determines a local pointed group $\ddot{R}_{\ddot{\hat{\epsilon}}}$ on $\End_{\mathcal{O}}(\Ind_{\ddot{Q}}^{G\times G'}(\ddot{L})),$ which, according to \cite[2.11.3]{Puig}, is included in $\ddot{Q}_{\ddot{L}}.$ This shows that $\ddot{Q}_{\ddot{L}}$ is a defect pointed group for both $(Q\times G')_{\hat{\delta}}$ and $(G\times G')_{\hat{\beta}}.$

We have obtained the embedding
\[f_1:(A_1)^Q_{\delta}\to ((\End_{\mathcal{O}}(\Ind_{\ddot G}^{G\times G'}(Y)))_1)_{\hat{\delta}}^{Q\times G'}=\End_{\mathcal{O}}(Y)_{\hat{\delta}}^{Q\times N'},\]
and then
\begin{align*} N_G(Q_{\delta})/Q &\simeq N_{G\times G'}((Q\times G')_{\hat{\delta}})/Q\times G'  \\
                                 &\simeq N_{G\times G'}((Q\times N')_{\hat{\delta}})/Q\times G'.
\end{align*}
The embedding $f_1$ induces the  isomorphisms
\begin{align*}\hat{f_1}(Q_{\delta}):\hat k_*\hat{\bar{N}}_{G}(Q_{\delta})&\simeq \hat k_*\hat{\bar{N}}_{G\times G'}((Q\times G')_{\hat k_*{\delta}}) \\
     &\simeq \hat k_*\hat{\bar{N}}_{G\times G'}((Q\times N')_{\hat{\delta}})/Q\times G',
\end{align*}
of ${\bar{N}}_{G}(Q_{\delta})$-graded algebras, and then we have
\[V_{A_1}(Q_{\delta})\simeq \Res_{\hat{f_1}(Q_{\delta})}(V_Y((Q\times N')_{\hat{\delta}})).\]
\end{subsec}

Finally,  notice that the indecomposable $\mathcal{O}\ddot G$-summand $Y$  of $\Ind_{\ddot{Q}}^{\ddot G}(\ddot{L})$ introduced here is projective when restricted to $\mathcal{O}(N\times 1)$ and to $\mathcal{O}(1\times N').$ As a conclusion of the above discussion, by Theorem \ref{main} we obtain:

\begin{corollary} The $\Gamma$-graded $\mathcal{O}(G\times G')$-module $\ddot{Y}:=\Ind_{\ddot G}^{G\times G'}(Y)$ induces a $\Gamma$-graded Morita equivalence between $b\mathcal{O}G$ and $b'\mathcal{O}G'.$
\end{corollary}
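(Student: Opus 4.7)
The plan is to apply Theorem \ref{main} to the module $\ddot{Y}=\Ind_{\ddot G}^{G\times G'}(Y)$, whose $1$-component is $Y$, with vertex $\ddot{Q}$ and source $\ddot{L}$. I first observe that $\ddot{Y}$ indeed fits the framework of \ref{s:ddotM} with $\ddot{P}=\ddot{Q}$ and $\ddot{N}=\ddot{L}$: the identification of $\ddot{Q}_{\ddot L}$ as a defect pointed group of $\ddot{G}_Y$ established via \cite[2.11.3]{Puig} in the preceding discussion ensures that $\ddot{Q}$ is a vertex of $Y$ with source $\ddot{L}$, while the projectivity of $Y$ upon restriction to $\mathcal{O}(N\times 1)$ and $\mathcal{O}(1\times N')$ noted just before the statement supplies the remaining input required by \ref{s:ddotM}.

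Next I check the three conditions of Theorem \ref{main}. The defect group conditions, that $Q$ is a defect group of $b$ on $(A_1)^G$ and $Q'$ is a defect group of $b'$ on $(A'_1)^{G'}$, are part of the standing hypotheses of \ref{sufficiency_for_morita_eq}. The multiplicity module condition is exactly the last display of that subsection, with $\delta$, $\hat{\delta}$ playing the roles of $\gamma$, $\hat{\gamma}$ and $\hat{f_1}(Q_\delta)$ playing the role of $\widehat{(\Ind_Q^G e)_1(Q_\delta)}$, so nothing further is needed there.

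The remaining condition, and the main obstacle, is to upgrade the hypothesis embedding
\[
f:A_\delta \to \Ind_\tau(\End_{\mathcal{O}}(\ddot L)\otimes_{\mathcal{O}}\Res_{\tau'}(A'_{\delta'}))
\]
to a $\Gamma$-graded $\mathcal{O}Q$-interior algebra isomorphism onto the $\hat{\delta}$-corner of the target, thereby producing the isomorphism $e$ required by Theorem \ref{main}. For this, I would use the defect bookkeeping already performed in the subsection: it was shown there that $\ddot{Q}_{\ddot L}$ is a defect pointed group of both $(Q\times G')_{\hat{\delta}}$ and $(G\times G')_{\hat{\beta}}$ on $\End_{\mathcal{O}}(\Ind_{\ddot G}^{G\times G'}(Y))_1$. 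This coincidence of defect groups with the defect $Q$ of $\delta$ forces, via \cite[Proposition 5.3]{Puig} and a rank comparison, that the corestriction of $f$ to the $\hat{\delta}$-component is an isomorphism of $\Gamma$-graded $\mathcal{O}Q$-interior algebras. Checking that the grading is preserved along the way reduces to the fact that each map in the composition defining $f$ was already listed as a $\Gamma$-graded $\mathcal{O}Q$-interior embedding in \ref{sufficiency_for_morita_eq}.

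With $e$, the defect group conditions, and the multiplicity module isomorphism all in hand, Theorem \ref{main} applies and yields the desired $\Gamma$-graded Morita equivalence between $b\mathcal{O}G$ and $b'\mathcal{O}G'$ induced by $\ddot{Y}$.
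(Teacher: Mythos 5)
Your proposal is correct and follows essentially the same route as the paper: the paper gives no separate argument for the corollary beyond the phrase ``by the above discussion and Theorem \ref{main}'', and your proof is precisely an unpacking of how the conclusions of \ref{sufficiency_for_morita_eq} (vertex and source of $Y$, projectivity on restriction, the defect pointed group identifications, and the multiplicity module isomorphism) verify the hypotheses of the sufficiency direction of Theorem \ref{main}. The only cosmetic difference is that the passage from the hypothesis embedding $f$ to the isomorphism $e$ onto the $\hat{\delta}$-corner is, in Puig's conventions, built into the definition of an embedding once the point $\hat{\delta}=f_1(\delta)$ has been matched with the module-theoretic point of Proposition \ref{points_on_1_cmp} --- which is exactly what the defect bookkeeping you cite accomplishes --- so no additional rank comparison is needed.
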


\section{Graded basic Morita equivalences} \label{sect:basic}

By using   the  constructions of the previous section, we get the following graded version of  \cite[Corollary 7.4 ]{Puig}, which allows us to define the notion of basic graded Morita equivalence. The notations are those of Section \ref{sect:prelim} and of Theorem \ref{main}.

\begin{proposition} \label{equiv_statements_for_basic} Assume that $\ddot{M}$ defines a graded Morita equivalence between $A$ and $A'.$  Then the following assertions are equivalent:
\begin{enumerate}
\item[$1)$] $\sigma$ is a group isomorphism;
\item[$2)$] $\sigma'$ is a group isomorphism;
\item[$3)$] $p$ does not divide the rank of $\ddot{N}$ over $\mathcal{O};$
\item[$4)$] $S$ is a Dade $\ddot{P}$-algebra.
\end{enumerate}
\end{proposition}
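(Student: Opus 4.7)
The plan is to show $1)\Leftrightarrow 2)$, then $1)\Leftrightarrow 3)$, and finally $3)\Leftrightarrow 4)$. Throughout I use that, by Theorem~\ref{main}, $P$ and $P'$ are defect groups of $b$ and $b'$ respectively.

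For $1)\Leftrightarrow 2)$: restricting $\ddot M$ to its $1$-component gives a Morita equivalence between $A_1=\mathcal{O}Nb$ and $A'_1=\mathcal{O}N'b'$, and Morita equivalent blocks of group algebras have defect groups of equal order, so $|P|=|P'|$. Both $\sigma\colon \ddot P\to P$ and $\sigma'\colon \ddot P\to P'$ are surjective by construction (see~\ref{s:ddotM}), hence each is an isomorphism iff $|\ddot P|=|P|=|P'|$, and the two conditions coincide.

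For $3)\Leftrightarrow 1)$: let $K=\ker\sigma$. Any $(1,g')\in\ddot G$ satisfies $\omega'(g')=\omega(1)=1$, so $g'\in N'$ and hence $K\le \ddot P\cap(1\times N')$. Since $\ddot N$ is an $\mathcal{O}\ddot P$-summand of $\Res_{\ddot P}^{\ddot G}(M)$ and $M$ is projective over $\mathcal{O}(1\times N')$ by~\ref{s:ddotM}, the restriction $\Res_K^{\ddot P}(\ddot N)$ is a projective, hence free ($\mathcal{O}K$ being local), $\mathcal{O}K$-module; thus $|K|$ divides $\mathrm{rank}_\mathcal{O}(\ddot N)$, and this immediately yields $3)\Rightarrow 1)$. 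For the converse, the isomorphisms $\sigma,\sigma'$ let us identify $\ddot P$ with both $P$ and $P'$, so the source-algebra isomorphism of Theorem~\ref{main} specializes to
\[
A_\gamma\simeq \bigl(\End_\mathcal{O}(\ddot N)\otimes_\mathcal{O} A'_{\gamma'}\bigr)_{\hat{\gamma}}
\]
as $\mathcal{O}P$-interior algebras. Since $P_\gamma$ is a defect pointed group on $A_1$, we have $\Br_{\ddot P}(A_\gamma)\neq 0$, and the known description of Brauer quotients of source algebras of blocks as twisted group algebras of the Brauer correspondents allows one to compare $k$-dimensions across this isomorphism, forcing $\Br_{\ddot P}(\End_\mathcal{O}(\ddot N))\neq 0$ and hence $p\nmid\mathrm{rank}_\mathcal{O}(\ddot N)$.

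For $3)\Leftrightarrow 4)$: recall that $\End_\mathcal{O}(V)$ is a Dade $\ddot P$-algebra iff $V$ is endo-permutation of vertex $\ddot P$ with $p\nmid\mathrm{rank}_\mathcal{O}(V)$, so $4)\Rightarrow 3)$ is immediate. For $3)\Rightarrow 4)$, the displayed isomorphism realizes $\End_\mathcal{O}(\ddot N)$ as a tensor factor of the source algebra $A_\gamma$, which is a permutation $\mathcal{O}P$-module of vertex $P$ by the standard structure theory of source algebras of blocks; one deduces that $\ddot N$ is itself endo-permutation of vertex $\ddot P$, giving the Dade property. The main obstacle I anticipate is the direction $1),2)\Rightarrow 3)$: the elementary projectivity argument through $\ker\sigma$ gives only the divisibility $|\ker\sigma|\mid\mathrm{rank}_\mathcal{O}(\ddot N)$, and the sharper condition $p\nmid\mathrm{rank}_\mathcal{O}(\ddot N)$ has to be extracted from the Brauer-quotient comparison on both sides of the source-algebra isomorphism in Theorem~\ref{main}, together with the precise description of $\Br_P$ applied to source algebras of blocks.
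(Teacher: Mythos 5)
Your reduction $3)\Rightarrow 1),2)$ via the projectivity of $\ddot N$ over $\mathcal{O}\Ker(\sigma)$ and $\mathcal{O}\Ker(\sigma')$ is exactly the paper's argument and is fine. The genuine gap is in the converse direction, i.e.\ in passing from the invertibility of $\sigma$ back to $3)$ and $4)$. Your step ``the known description of Brauer quotients of source algebras \dots allows one to compare $k$-dimensions across this isomorphism, forcing $\Br_{\ddot P}(\End_{\mathcal{O}}(\ddot N))\neq 0$'' does not work as stated: the Brauer construction commutes with tensor products and behaves well on corners $\hat\imath(S\otimes A'_{\gamma'})\hat\imath$ only in the presence of $P$-stable bases, and whether $S$ admits such structure ($S$ being a permutation, indeed Dade, $\ddot P$-algebra) is precisely what is being proved. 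Likewise, in $3)\Rightarrow 4)$ you say that since $A_\gamma$ is a permutation $\mathcal{O}P$-module ``one deduces that $\ddot N$ is itself endo-permutation''; but $\End_{\mathcal{O}}(\ddot N)$ is not a tensor factor of $A_\gamma$ --- the isomorphism goes the other way, identifying $A_\gamma$ with a corner of $S\otimes A'_{\gamma'}$ --- and extracting the Dade property of $S$ from such an embedding is a deep result, not a routine structural observation. This is the content of \cite[Theorem 7.2]{Puig}, which the paper invokes after first establishing (via Proposition \ref{points_on_1_cmp}, the proof of Theorem \ref{main}, and \cite[Remark 6.11]{Puig}) that $\hat A_{\hat\gamma}$ is an $\mathcal{O}$-direct summand of $\hat A_\alpha\simeq A$ and hence carries a $P$-stable $\mathcal{O}$-basis, and then exhibiting the embedding $\hat A_{\hat\gamma}\to\Res_{\sigma^{-1}}(S)\otimes_{\mathcal{O}}\Res_{\sigma'\circ\sigma^{-1}}(A'_{\gamma'})$. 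Without this input your argument does not close the cycle of implications.

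Two smaller points. Your direct proof of $1)\Leftrightarrow 2)$ is not needed once the cycle $4)\Rightarrow 3)\Rightarrow 1),2)$ and $1)\Rightarrow 4)$ is in place, and as written it has a wrinkle: by Theorem \ref{main}, $P$ and $P'$ are defect groups of $b$ and $b'$ \emph{in $G$ and $G'$} (as primitive idempotents of $(A_1)^G$ and $(A_1')^{G'}$), so the Morita invariance of the defect applied to $A_1\simeq\mathcal{O}Nb$ and $A'_1\simeq\mathcal{O}N'b'$ only gives $|P\cap N|=|P'\cap N'|$; you must add that $\ddot P\le\ddot G$ forces $\omega(PN/N)=\omega'(P'N'/N')$ to conclude $|P|=|P'|$. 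Finally, $4)\Rightarrow 3)$ is indeed standard (the paper simply cites the proof of \cite[Corollary 7.4]{Puig}), so that part is acceptable.
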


\begin{proof} The implication $4)\Rightarrow 3)$ follows exactly as in the proof of \cite[Corollary 7.4]{Puig}. Further, we assume that $3)$ holds. Then, since $\Ker(\sigma)\leq 1\times N',$ $\Ker(\sigma')\leq N\times 1$ and $\ddot{N}\mid \Res^{\ddot G}_{\ddot{P}}(M),$ the assumptions made in \ref{s:ddotM} show that $\ddot{N}$ is a projective $\mathcal{O}\Ker(\sigma)$-module and  a projective $\mathcal{O}\Ker(\sigma')$-module. This fact forces $\sigma$ and $\sigma'$ to be injective group homomorphisms.  Now Proposition \ref{points_on_1_cmp} and the proof of Theorem \ref{main} show that $\hat{A}_{\hat{\gamma}}$ is a direct $\mathcal{O}$-summand of $\hat{A}_{\alpha},$ since $\gamma$ corresponds to $\hat{\gamma},$ $b\cdot \alpha=\alpha,$ $P_{\gamma}$ is a defect pointed group of $G_{\{b\}}$, and since the isomorphism $A_1\simeq (\hat{A}_1)_{\alpha}$ forces $A\simeq \hat{A}_{\alpha}.$  The argument  used in \cite[Remark 6.11]{Puig} guarantees that $\hat{A}_{\hat{\gamma}}$ has a $P$-stable $\mathcal{O}$-basis. The $\mathcal{O}P$-interior algebra embedding
\[\hat{A}_{\hat{\gamma}}\simeq (\Ind_{\sigma}(S\otimes_{\mathcal{O}}\Res_{\sigma'}(A'_{\gamma'})))_{\hat{\gamma}},\] given by Proposition \ref{points_on_1_cmp}, determines the $\mathcal{O}P$-interior algebra embedding
\[\hat{A}_{\hat{\gamma}}\to \Res_{\sigma^{-1}}(S)\otimes_{\mathcal{O}}\Res_{\sigma'\circ \sigma^{-1}}(A'_{\gamma'}).\] Finally, we apply \cite[Theorem 7.2]{Puig} to this last embedding.
\end{proof}

\begin{definition} \label{basic_graded_def} The $\Gamma$-graded module $\ddot{M}$ determines a {\it basic  graded Morita equivalence} between $A$ and $A'$ if $\ddot{M}$ determines a graded Morita equivalence between $A$ and $A'$, and any of the equivalent statements in Corollary \ref{equiv_statements_for_basic} hold.
\end{definition}

\begin{corollary} \label{c:truncation} Assume that the $\Gamma$-graded $\mathcal{O}(G\times G')$-module $\ddot{M}$ determines a basic graded Morita equivalence between $A$ and $A'.$ Let $\Lambda$ be a subgroup of $\Gamma$, and let $H:=\omega^{-1}(\Lambda)$ and  $H':={\omega'}^{-1}(\Lambda)$.

Then the $\Lambda$-graded $\mathcal{O}(H\times H')$-module $\ddot{M}_\Lambda=\bigoplus_{x\in \Lambda}\ddot{M}_x$ determines a basic graded Morita equivalence between $A_\Lambda$ and $A'_\Lambda.$
\end{corollary}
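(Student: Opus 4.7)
The plan is to verify that the setup of Sections~2 and~3 carries over from $(G, G', \Gamma)$ to $(H, H', \Lambda)$, to observe that the truncation $\ddot{M}_\Lambda$ automatically yields a $\Lambda$-graded Morita equivalence between $A_\Lambda$ and $A'_\Lambda$, and then to check the basic condition by a short Mackey argument combined with Proposition~\ref{equiv_statements_for_basic}.

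The first step is essentially bookkeeping: the surjections $\omega|_H$ and $\omega'|_{H'}$ have kernels $N$ and $N'$, so $H/N \simeq \Lambda \simeq H'/N'$; the diagonal subgroup in $H \times H'$ is $\ddot{H} = \ddot{G} \cap (H \times H')$; the blocks $b$ and $b'$ remain $H$- and $H'$-invariant; $A_\Lambda = \mathcal{O}Hb$ and $A'_\Lambda = \mathcal{O}H'b'$ are the corresponding block extensions; and the $1$-component of $\ddot{M}_\Lambda$ is still $M$. Starting from $\ddot{M} \simeq \Ind^{G \times G'}_{\ddot{G}}(M)$ one obtains $\ddot{M}_\Lambda \simeq \Ind^{H \times H'}_{\ddot{H}}(M)$ as a $\Lambda$-graded $(\mathcal{O}H, (\mathcal{O}H')^{\mathrm{op}})$-bimodule. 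The conditions of~\ref{s:ddotM} for the truncated setup---indecomposability of $M$, extension of its $\mathcal{O}(N\times N')$-action to $\ddot{H}$, and projectivity of the restrictions to $1 \times N'$ and $N \times 1$---are all inherited from the original setup. The fact that $\ddot{M}_\Lambda$ defines a $\Lambda$-graded Morita equivalence between $A_\Lambda$ and $A'_\Lambda$ then follows from~\cite[Theorem 5.1.2]{M} applied in the truncated setting.

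For the basic condition, let $\ddot{P}_\Lambda$ be a vertex of $M$ regarded as an $\mathcal{O}\ddot{H}$-module with associated source $\ddot{N}_\Lambda$, noting that $M$ is indecomposable as an $\mathcal{O}\ddot{H}$-module because it is already so as an $\mathcal{O}(N\times N')$-module and $N\times N' \leq \ddot{H}$. By Proposition~\ref{equiv_statements_for_basic} applied to the truncated setting, it suffices to verify that $\sigma_\Lambda : \ddot{P}_\Lambda \to P_\Lambda$ is an isomorphism; surjectivity is built in, so the content is $\ddot{P}_\Lambda \cap (1\times N') = 1$. Applying Mackey's formula to $M \mid \Ind^{\ddot{G}}_{\ddot{P}}(\ddot{N})$ and using indecomposability of $M$ over $\mathcal{O}\ddot{H}$ produces some $x \in \ddot{G}$ with an $\ddot{H}$-conjugate of $\ddot{P}_\Lambda$ contained in $\ddot{H} \cap {}^x\ddot{P}$; hence $\ddot{P}_\Lambda \leq {}^y\ddot{P}$ for some $y \in \ddot{G}$. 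Since $1\times N'$ is normal in $\ddot{G}$ and $\ddot{P} \cap (1\times N') = \Ker(\sigma) = 1$ by the basic hypothesis on $\ddot{M}$, one concludes $\ddot{P}_\Lambda \cap (1\times N') \leq {}^y\bigl(\ddot{P} \cap (1\times N')\bigr) = 1$, giving the required injectivity. The main obstacle is precisely this comparison of vertices across $\ddot{G}$ and $\ddot{H}$; the Mackey decomposition together with the normality of $1\times N'$ in $\ddot{G}$ is what allows the argument to go through cleanly, without requiring $\ddot{P}$ itself to lie inside $\ddot{H}$.
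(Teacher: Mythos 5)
Your proposal is correct and follows essentially the same route as the paper: the truncation immediately gives the $\Lambda$-graded Morita equivalence, and a Mackey decomposition of $M$ as an $\mathcal{O}\ddot{H}$-module shows that its vertex there is contained in a $\ddot{G}$-conjugate of $\ddot{P}$, whence injectivity of $\sigma$ passes to the truncated projection. Your extra care with the conjugation (using normality of $1\times N'$ in $\ddot{G}$ rather than replacing the vertex by a conjugate contained in $\ddot{P}$) is a cosmetic variant of the paper's argument, not a different method.
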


\begin{proof} We  have the isomorphism
\[A_\Lambda\simeq \End_{\mathcal{O}}(\ddot{M}_\Lambda)^{1\times H'}\] of $\Lambda$-graded algebras,
which means that $M$ induces an $\Lambda$-graded Morita equivalence between $A_\Lambda$ and $A'_\Lambda.$ Denote
\[\ddot H:=\{(g,g')\in H\times H'\mid \omega(g)=\omega(g') \}.\]
As $\mathcal{O}(H\times H')$-modules, we have that
\[M\mid \sum_{(h,h')\in [\ddot{H}\setminus \ddot{G}/\ddot{P}]}\Ind_{\ddot{P}^{(h,h')}\cap\ddot{H}}^{\ddot H}(\ddot{N}^{(h,h')}),\]
where $\ddot{P}\leq \ddot{G}$ is the above vertex of $M$ in $\ddot{G}$.  We may choose a vertex $\ddot{Q}$  of $M$ in $\ddot{H}$ such that $\ddot{Q}\leq \ddot{P}.$ Denote by $\sigma_{\ddot{Q}}$ and $\sigma'_{\ddot{Q}}$ the projections $\ddot{Q}\to Q$ and $\ddot{Q}\to Q'$ respectively, determined by the projections $\ddot{H}\to H$ and $\ddot{H}\to H'.$ We obtain the commutative diagram
\[\begin{xy} \xymatrix{ \ddot{P} \ar[r]^{\sigma} &P
\\  \ddot{Q}\ar[u]\ar[r]^{\sigma_{\ddot{Q}}} &Q\ar[u]    } \end{xy},\]
where the vertical maps are the inclusions. A similar commutative diagram exists for $P'$ and $Q'.$ Finally, if $\sigma$ is an isomorphism, $\sigma_{\ddot{Q}}$ is also an isomorphism.
\end{proof}

\section{The graded structure of the extended Brauer quotient}

\begin{subsec}  We keep the notations introduced in \ref{sect:prelim}. Let $Q$ be a $p$-subgroup of $G$ and consider the  subgroup
\[K:=\Aut^\Gamma(Q)=\{\varphi\in \Aut(Q)\mid \varphi(u)\in uN \mbox{ for all } u\in Q\}\]
of grade-preserving automorphisms of $Q$. For any $\varphi\in K$, consider the $\varphi$-twisted diagonal map
\[\Delta_\varphi:Q\to Q\times Q, \qquad u\mapsto(\varphi(u),u).\]
Denote by $N^K_G(Q)$ the inverse image of $K$ via the group homomorphism \[N_G(Q)\to \Aut(Q).\] It is clear that $C_N(Q)$ is a normal subgroup of $N^K_G(Q).$ Let $N_{\mathcal{O}G}^\varphi(Q)$ be the set of $\Delta_\varphi(Q)$-fixed elements of $\mathcal{O}G$, and let
\[\bar N^\varphi_{kG}(Q)=(\mathcal{O}G)(\Delta_\varphi(Q)).\]
The aim of this section is to point out that the extended Brauer quotient
\[\bar{N}^K_{kG}(Q)=\bigoplus_{\varphi\in K}\bar{N}^{\varphi}_{kG}(Q)\] introduced in \cite{Puig3} (see also \cite{CT} for the generalization to $N$-interior $G$-algebras), and the isomorphism of \cite[Theorem 3.5]{Puig3} admit the following $N_G^K(Q)/C_N(Q)$-graded structure.
\end{subsec}

\begin{proposition} \label{gr_ext_quotient}  There exists an isomorphism
\[\psi^K:kN_G^K(Q)\simeq \bar{N}^K_{kG}(Q)\]
of $N_G^K(Q)/C_N(Q)$-graded $N_G^K(Q)$-interior $N_G(Q)$-algebras.
\end{proposition}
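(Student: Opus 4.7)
The plan is to exhibit $\psi^K$ explicitly as a Brauer-type map on the basis elements of $kN_G^K(Q)$, and then verify successively that it is a $k$-algebra isomorphism, that it respects the $N_G^K(Q)$-interior and $N_G(Q)$-algebra structures, and finally that it preserves a natural $N_G^K(Q)/C_N(Q)$-grading on both sides.

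To identify $\bar N^\varphi_{kG}(Q)$ concretely, I will use that $\mathcal{O}G$ is a permutation $\mathcal{O}(Q\times Q)$-module with basis $G$ under $(u_1,u_2)\cdot g=u_1 g u_2^{-1}$. The standard formula for the Brauer construction on a permutation module then gives
\[\bar N^\varphi_{kG}(Q)=(kG)(\Delta_\varphi(Q))=k\cdot X^\varphi,\]
where $X^\varphi=\{g\in N_G(Q)\mid gug^{-1}=\varphi(u)\text{ for all }u\in Q\}$ is either empty or a left coset of $C_G(Q)$ in $N_G(Q)$. Summing over $\varphi\in K$ gives $\bigsqcup_{\varphi\in K}X^\varphi=N_G^K(Q)$, so I define
\[\psi^K\colon kN_G^K(Q)\longrightarrow \bar N^K_{kG}(Q),\qquad g\longmapsto \Br^{\alpha_g}(g),\]
extended $k$-linearly, where $\alpha_g\in K$ is the automorphism of $Q$ induced by conjugation with $g$. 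By construction $\psi^K$ sends the canonical basis of $kN_G^K(Q)$ bijectively to a basis of $\bar N^K_{kG}(Q)$, so it is a $k$-linear isomorphism.

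Multiplicativity follows because, for $g_i\in X^{\varphi_i}$, the group product $g_1 g_2$ belongs to $X^{\varphi_1\varphi_2}$, and the multiplication on the extended Brauer quotient (as in \cite[Section 3]{Puig3}) satisfies $\bar N^{\varphi_1}_{kG}(Q)\cdot\bar N^{\varphi_2}_{kG}(Q)\subseteq \bar N^{\varphi_1\varphi_2}_{kG}(Q)$ with $\Br^{\varphi_1}(g_1)\Br^{\varphi_2}(g_2)=\Br^{\varphi_1\varphi_2}(g_1 g_2)$. The $N_G^K(Q)$-interior structure on $\bar N^K_{kG}(Q)$ is the map $g\mapsto\Br^{\alpha_g}(g)$, which agrees with $\psi^K$ restricted to $N_G^K(Q)$; and conjugation by $n\in N_G(Q)$ sends $X^\varphi\to X^{\alpha_n\varphi\alpha_n^{-1}}$ via $g\mapsto ngn^{-1}$, matching the conjugation action on $kN_G^K(Q)$. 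Hence $\psi^K$ is an isomorphism of $N_G^K(Q)$-interior $N_G(Q)$-algebras.

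To pin down the $N_G^K(Q)/C_N(Q)$-grading on $\bar N^K_{kG}(Q)$ intrinsically, I will combine the decomposition $\bar N^K_{kG}(Q)=\bigoplus_{\varphi\in K}\bar N^\varphi_{kG}(Q)$ with the $\Gamma$-grading that $\bar N^K_{kG}(Q)$ inherits from $\mathcal{O}G$ (note that $Q\subseteq N$ acts trivially on $\Gamma$, so the fixed-point subspaces and the Brauer quotient are naturally $\Gamma$-graded): the $gC_N(Q)$-component will be the $\Gamma$-homogeneous part of degree $\omega(g)$ inside $\bar N^{\alpha_g}_{kG}(Q)$. A short calculation in the coset $X^{\alpha_g}=C_G(Q)g$ shows that $cg$ and $c'g$ (with $c,c'\in C_G(Q)$) lie in the same $C_N(Q)$-coset of $N_G^K(Q)$ if and only if $\omega(c)=\omega(c')$, which identifies this intrinsic component with $\Br^{\alpha_g}(k\cdot gC_N(Q))=\psi^K((kN_G^K(Q))_{gC_N(Q)})$, so $\psi^K$ is grade-preserving. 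The main subtlety is that the grading is not visible from the definition of the extended Brauer quotient alone; the key observation is that the $\Gamma$-grading of $\mathcal{O}G$, together with the $\varphi$-decomposition, refines precisely to the coset grading of $N_G^K(Q)$ modulo $C_N(Q)$, which is the content we must extract from the combinatorics of the cosets $C_G(Q)g$.
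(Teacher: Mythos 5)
Your argument is correct and is essentially the paper's own proof in a more explicit form: both identify the $\Delta_\varphi(Q)$-fixed points of the permutation basis $G$ with the elements of $N_G^K(Q)$ inducing $\varphi$ (a coset of $C_G(Q)$), define $\psi^K$ as the sum of the Brauer maps restricted to the $C_N(Q)$-cosets, and obtain the grading by refining each coset $X^{\alpha_g}=C_G(Q)g$ along the fibres of $\omega$; your explicit check of multiplicativity via $X^{\varphi_1}\cdot X^{\varphi_2}\subseteq X^{\varphi_1\varphi_2}$ is a welcome addition, since the paper leaves it implicit. The only point to adjust is the parenthetical justification of the $\Gamma$-grading on the fixed-point spaces: one should not invoke $Q\subseteq N$ (which is not assumed in this section), but rather that $\varphi\in K=\Aut^\Gamma(Q)$ gives $\omega(\varphi(u))=\omega(u)$, so the $\Delta_\varphi(Q)$-action is compatible with the decomposition of $G$ into $\omega$-fibres — the same compatibility the paper uses when asserting $(\mathcal{O}Nz)^{\Delta_{\varphi_x}(Q)}\subseteq\mathcal{O}Nz$.
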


\begin{proof} The set $G$ is a $Q\times Q$-invariant $\mathcal{O}$-basis of $\mathcal{O}G$, so it is also $\Delta_{\varphi}(Q)$-invariant for any $\varphi\in K.$ Hence for any $\varphi\in K,$ the subalgebra $\mathcal{O}G^{\Delta_{\varphi}(Q)}$ of $\Delta_{\varphi}(Q)$-fixed elements of $\mathcal{O}G$   has as basis the class sums \[\sum_{u\in Q}\varphi (u)gu^{-1},\] where $g\in G.$ Now, if $\varphi (u)gu^{-1}=g$, we get $\varphi (u)=u^{g^{-1}}$ for any $u\in Q,$ forcing $g\in N_G^K(Q).$

For any $x\in N_G^K(Q)$ and any $u\in Q$,  denote $\varphi_x(u)=u^{x^{-1}}$.  Then it is easy to see that we get
\[(\mathcal{O}Nz)^{\Delta_{\varphi_x}(Q)}\subseteq \mathcal{O}Nz,\]
as $\mathcal{O}$-modules, for any $z,x\in N_G^K(Q).$
We can organize the algebra
\[\mathcal{O}N_G^K(Q)N=\bigoplus_{z\in [N_G^K(Q)/C_N(Q)] }\mathcal{O}Nz,\] such that
any class $Nz$ is a $\Delta_{\varphi_x}(Q)$-invariant $\mathcal{O}$-basis  of the module $\mathcal{O}Nz$, which has a $\Delta_{\varphi_x}(Q)$-fixed element (for a unique representative $x\in [N_G^K(Q)/C_G(Q)]$) if and only if $zC_G(Q)=xC_G(Q).$

Since in our situation the map sending $x$ to $\varphi_x$ induces the monomorphism
\[N_{G}^K(Q)/C_G(Q)\simeq K,\] and since for any $z\in [N_G^K(Q)/C_N(Q)]$ there is $t\in [C_G(Q)/C_N(Q)]$ and $x\in [N_{G}^K(Q)/C_G(Q)]$ such that $z=xt,$ from the above and \cite[Proposition 2.5]{CT} we obtain that
\begin{align*}\bar{N}^K_{kG}(Q)&=\bar{N}_{kN}^{K}(Q)\bigoplus\left(\bigoplus_{\substack{z\in [N_G^K(Q)/C_N(Q)]\\z\notin N}  }\bar{N}_{\mathcal{O}Nz}^{\varphi_x}(Q)\right)\\
&=kN_N(Q)\bigoplus\left(\bigoplus_{\substack{z\in [N_G^K(Q)/C_N(Q)]\\z\notin N}  }\bar{N}_{\mathcal{O}Nz}^{\varphi_x}(Q)\right).
\end{align*}
It is also easy to check that \[kC_N(Q)z\subseteq N^{\varphi_x}_{\mathcal{O}Nz}(Q)\] for any $z\in [N_G^K(Q)/C_N(Q)].$ We define $\psi^K$ to be the sum
\[\psi^K=\bigoplus_{z\in [N_G^K(Q)/C_N(Q)]}\psi^K_z\]
of the family $\psi^K_z$ of module homomorphisms,  where each  $\psi^K_z$ is the restriction to $kC_N(Q)z$ of $\Br_{\Delta_{\varphi_x}(Q)}^{kG}.$ Since $\psi^K_1$ is an isomorphism, so is $\psi^K.$

We can define an $N_G(Q)$-action on $K$ as follows. For any $\varphi\in K$ and any $y\in N_G(Q),$ $\varphi^y$ is the automorphism of $Q$ given by
\[\varphi^y(u)=y^{-1}\varphi(yuy^{-1})y,\] for any $u\in Q.$ So, for any $\varphi\in K$ and any $y\in N_G(Q),$ the element $a\in \mathcal{O}G^{\Delta_{\varphi}(Q)}$ verifies $a^y\in \mathcal{O}G^{\Delta_{\varphi^y}(Q)}.$ Hence $\bar{N}^K_{kG}(Q)$ is a $N_G(Q)$-algebra. Now, by its definition,  $\psi^K$ is an isomorphism of $N_G(Q)$-algebras.
\end{proof}

\begin{remark}\label{ext_Br_Quot_for_local}  Let $T$ be a subgroup of $N^K_G(Q)$ such that $Q\leq T$,  and consider a pointed group $T_{\mu}$  on $\mathcal{O}N$ such that $\Br^{\mathcal{O}N}_Q(\mu)\neq 0.$ Then we also have that $\Br^{\mathcal{O}G}_Q(\mu)\neq 0.$ Choose a primitive idempotent $j\in \mu.$ The $\mathcal{O}Q$-interior algebra $j\mathcal{O}Gj$ is also $G/N$-graded, and consequently $\bar{N}^K_{i\mathcal{O}Gi}(Q)$ is $N_G^K(Q)/C_N(Q)$-graded as well. Moreover, the $G/N$-graded  $\mathcal{O}T$-interior algebra embedding
\[j\mathcal{O}Gj\to \mathcal{O}G\] gives the $N_G^K(Q)/C_N(Q)$-graded $\mathcal{O}T$-interior algebra embedding
\[\bar{N}^K_{jkGj}(Q)\to \bar{N}^K_{kG}(Q).\]
In $kC_G(Q)$, we clearly have the equality \[\Br_{Q}^{\mathcal{O}N}(j)=\Br_{Q}^{\mathcal{O}G}(j),\] and then \cite[Corollary 3.7]{Puig3} still holds in our case, giving the isomorphism
\[(kN_G^K(Q))_{\widehat{\Br_Q(\mu)}}\simeq \bar{N}^K_{(kG)_{\mu}}(Q)\] of $N_G^K(Q)/C_N(Q)$-graded $\mathcal{O}T$-interior algebras.
\end{remark}

\begin{remark} \label{ext_Br_Quot_for_S}  Let $P$ be a $p$-subgroup of $G$ and denote in this section, for the moment, the $kP$-interior algebra \[S= \End_k(W),\] where  $W$ be an endopermutation $kP$-module such that
\[(\End_k(W))(P)\neq 0.\] Let $Q$ be subgroup of $P,$  and let $x\in [N_G^K(Q)/C_G(Q)].$ Then  the $\mathcal{O}P$-interior algebra $S\otimes_{\mathcal{O}}\mathcal{O}G$ is $G/N$-graded and we have that
\begin{align*}(S\otimes_{\mathcal{O}}\mathcal{O}G)(\Delta_{\varphi_x}(Q))&=S(\Delta_{\varphi_x}(Q))\otimes_k (\mathcal{O}G)(\Delta_{\varphi_x}(Q))\\
&=\bigoplus_{t\in [C_G(Q)/C_N(Q), z=xt]}(S(\Delta_{\varphi_x}(Q))\otimes_k \bar{N}_{\mathcal{O}Nz}^{\varphi_x}(Q)).\end{align*}
Consequently, the $N_P^K(Q)$-interior algebra
\[\bar{N}^K_{S\otimes_{\mathcal{O}}\mathcal{O}G}(Q)=\bigoplus_{z\in [N_G^K(Q)/C_N(Q)]}S(\Delta_{\varphi_x}(Q))\otimes_k \bar{N}_{\mathcal{O}Nz}^{\varphi_x}(Q)\]
is also $N_G^K(Q)/C_N(Q)$-graded. We get yet another $N_G^K(Q)/C_N(Q)$-graded $N_P^K(Q)$-interior algebra by setting
\[S(Q)\otimes_k\bar{N}^K_{\mathcal{O}G}(Q)=\bigoplus _{z\in [N_G^K(Q)/C_N(Q)]}S(Q)\otimes_k \bar{N}_{\mathcal{O}Nz}^{\varphi_x}(Q).\]
\end{remark}

By using the notation introduced in Remarks \ref{ext_Br_Quot_for_local} and \ref{ext_Br_Quot_for_S}, and by adapting the proof of \cite[Proposition 3.9]{Puig3} we obtain:

\begin{proposition}\label{gr_on_tens_simple_alg} Let $Q_{\delta''}$ be a local pointed group on $A_1$, and let
\[\tilde{K}:=F_{S}(Q_{\delta''})\cap K.\]
Then there is an  isomorphism
\[\bar{N}_{S\otimes j(\mathcal{O}G)j}^K(Q)\simeq S(Q)\otimes_k\bar{N}^K_{j(\mathcal{O}G)j}(Q)\]
of $N_G^{\tilde{K}}(Q)/C_N(Q)$-graded $T$-interior algebras.
\end{proposition}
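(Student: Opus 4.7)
The plan is to adapt the proof of \cite[Proposition 3.9]{Puig3} to the graded setting developed in Remarks \ref{ext_Br_Quot_for_local} and \ref{ext_Br_Quot_for_S}. Applying the extended Brauer quotient to the grade-preserving embedding $S\otimes_{\mathcal{O}}j(\mathcal{O}G)j \hookrightarrow S\otimes_{\mathcal{O}}\mathcal{O}G$ of $\Gamma$-graded $\mathcal{O}T$-interior algebras, one obtains, in complete analogy with the decomposition recorded in Remark \ref{ext_Br_Quot_for_S}, an $N_G^K(Q)/C_N(Q)$-graded decomposition of $\bar{N}^K_{S\otimes j(\mathcal{O}G)j}(Q)$ whose degree-$z$ component is
\[S(\Delta_{\varphi_x}(Q))\otimes_k \bar{N}^{\varphi_x}_{j(\mathcal{O}G)j\cap \mathcal{O}Nz}(Q).\]
The task then reduces to constructing, for each degree $z$ with $\varphi_x\in \tilde K$, a canonical $k$-linear isomorphism $S(\Delta_{\varphi_x}(Q))\simeq S(Q)$, and assembling these componentwise maps into a graded $\mathcal{O}T$-interior algebra isomorphism.

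The component isomorphism is the heart of the argument. Since $Q_{\delta''}$ is local, $S=\End_k(W)$ is an endopermutation $kP$-algebra (with $W$ the source of $\delta''$), and the fusion condition $\varphi_x\in \tilde K = F_S(Q_{\delta''})\cap K$ provides an invertible element $s_x\in S^{*}$ inducing $\varphi_x$ on $Q$. Conjugation by $s_x$ carries $S^{\Delta(Q)}$ bijectively onto $S^{\Delta_{\varphi_x}(Q)}$, and after passing to Brauer quotients it descends to the desired $k$-algebra isomorphism $S(Q)\simeq S(\Delta_{\varphi_x}(Q))$, exactly as in Puig's original argument. Tensoring with the identity on $\bar{N}^{\varphi_x}_{j(\mathcal{O}G)j\cap \mathcal{O}Nz}(Q)$ then produces the degree-$z$ component of the map.

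The principal obstacle is verifying multiplicativity. For homogeneous elements of degrees $z,z'$ indexed by $\varphi_x,\varphi_{x'}\in \tilde K$, the product in $\bar{N}^K_{S\otimes j(\mathcal{O}G)j}(Q)$ must correspond, under the assembled map, to the analogous product on the right. The key observation is that any two lifts of $\varphi_x$ differ by an element of $C_S(Q)^{*}$, whose image in $S(Q)$ is central, so the cocycle produced by the family $(s_x)$ is trivialised upon projection to $S(Q)$ and is absorbed into the crossed-product structure already carried by $\bar{N}^K_{j(\mathcal{O}G)j}(Q)$ through Proposition \ref{gr_ext_quotient}. Equivariance for the $T$-action and compatibility with the $\mathcal{O}T$-interior structure then follow routinely: $T\le N_G^K(Q)$ normalises $Q_{\delta''}$ and hence stabilises $\tilde K$, while the structural map of $T$ factors identically through both sides via the natural embeddings.
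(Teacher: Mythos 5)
The overall strategy you outline --- decompose $\bar{N}^K_{S\otimes j(\mathcal{O}G)j}(Q)$ componentwise using Remark \ref{ext_Br_Quot_for_S}, and map each piece $S(\Delta_{\varphi_x}(Q))$ to $S(Q)$ by translating with a fusion element $s_x\in S^{*}$ lifting $\varphi_x$ --- is indeed the route suggested by the paper's pointer to \cite[Proposition 3.9]{Puig3}. However, your justification of multiplicativity has a real gap. You assert that two lifts of $\varphi_x$ differ by an element of $C_S(Q)^{*}=(S^{Q})^{*}$ ``whose image in $S(Q)$ is central''. This is false in general: $S(Q)$ is by construction a quotient of $S^{Q}$, so the image of $(S^{Q})^{*}$ in $S(Q)$ is all of $S(Q)^{*}$, and $S(Q)$ is a matrix algebra over an extension of $k$ --- not commutative unless the $Q$-fixed part of $W$ at the Brauer quotient level is $1$-dimensional. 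Consequently the $2$-cocycle $c_{x,x'}=s_{xx'}^{-1}s_xs_{x'}\in (S^{Q})^{*}$ produced by your family $(s_x)$ does \emph{not} become trivial in $S(Q)$; multiplicativity of the assembled map fails for the naive componentwise formula, and the ``absorption into the crossed-product structure of $\bar{N}^K_{j(\mathcal{O}G)j}(Q)$'' must be argued explicitly by comparing the cocycle coming from the $s_x$ with the one coming from the homogeneous units supplied by Proposition \ref{gr_ext_quotient} and Remark \ref{ext_Br_Quot_for_local}; that comparison is precisely where the relationship between the point $\delta''$, the idempotent $j\in\mu$, and the source algebra $S$ enters, and it is what the adaptation of \cite[Proposition 3.9]{Puig3} hinges on.

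A second, smaller issue: the stated isomorphism is of $N_G^{\tilde K}(Q)/C_N(Q)$-graded algebras, whereas the decompositions in Remark \ref{ext_Br_Quot_for_S} and in your argument are a priori indexed over $N_G^K(Q)/C_N(Q)$. You should make explicit that for $\varphi_x\in K\setminus\tilde K$ one has $S(\Delta_{\varphi_x}(Q))=0$ (this is what $\varphi_x\notin F_S(Q_{\delta''})$ means), so that the left-hand side is supported on $\tilde K$, and then explain why the right-hand side is likewise supported on $\tilde K$ (or that the statement is to be read as an isomorphism of the $\tilde K$-truncations). As it stands your proposal neither establishes the multiplicativity nor addresses this mismatch of grading groups.
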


\section{Local basic graded Morita equivalences}

In this section we assume that the bimodule $\ddot{M}$ induces a basic graded Morita equivalence between $A$ and $A'$, as defined in Section \ref{sect:basic}.

\begin{subsec}  For any subgroup $Q$ in $P,$ denoting $\ddot{Q}=\sigma^{-1}(Q)$ and $\sigma'(\ddot{Q})=Q',$ we have that
\[N_P(Q)\simeq N_{\ddot{P}}(\ddot{Q})\simeq N_{P'}(Q'),\] and the  action of $N_{\ddot{G}}(\ddot{Q})$ on $\ddot{Q}$ determines the group isomorphisms
\[N_G(Q)/C_N(Q)\simeq N_{G'}(Q')/C_{N'}(Q') \]
and
\[N_G(Q)/C_G(Q)\simeq N_{G'}(Q')/C_{G'}(Q'). \]
\end{subsec}

\begin{subsec} \label{s:pointbij} We claim that there is a bijection between the set of local points of $Q$ on $(A_1)_{\gamma}$ and the set of local points of $Q'$ on $(A'_1)_{\gamma'}.$ Indeed, by our assumption, the  isomorphism \[e:A_{\gamma}\longrightarrow (\Ind_{\sigma}(\End_{\mathcal{O}}(\ddot{N})\otimes_{\mathcal{O}}\Res_{\sigma'}(A'_{\gamma'})))_{\hat{\gamma}}\] of $\Gamma$-graded $\mathcal{O}P$-interior algebras introduced in Theorem \ref{main} determines the embedding
\begin{equation}\label{8}   A_{\gamma}\to \Res_{\sigma^{-1}}(S)\otimes_{\mathcal{O}}\Res_{\sigma'\circ \sigma^{-1}}(A')_{\gamma'}\end{equation}
of $\Gamma$-graded $\mathcal{O}P$-interior algebra, and in particular, the embedding
\begin{equation}\label{9}   (A_1)_{\gamma}\to \Res_{\sigma^{-1}}(S)\otimes_{\mathcal{O}}\Res_{\sigma'\circ \sigma^{-1}}(A'_1)_{\gamma'}\end{equation}
of $\mathcal{O}(P\cap N)$-interior $P$-algebras.

Since $S(\ddot{Q})$ is a simple $k$-algebra, following \cite[7.6]{Puig}, we still have the  embedding
\begin{equation}\label{10}(A_1)_{\gamma}(Q)\to \Res_{\sigma^{-1}}(S(\ddot{Q}))\otimes_{\mathcal{O}}\Res_{\sigma'\circ \sigma^{-1}}(A_1')_{\gamma'}(Q').\end{equation}
of $N_P(Q)$-algebras, and this proves that claim.

Furthermore, if $Q_{\delta}$ and $Q'_{\delta'}$ correspond under this bijection, then by (\ref{9}) we also get the  embedding
\begin{equation}\label{11}A_{\delta}\to \Res_{\sigma^{-1}}(S_{\ddot{\delta}})\otimes_{\mathcal{O}}\Res_{\sigma'\circ \sigma^{-1}}A_{\delta'}\end{equation}
of $\Gamma$-graded $Q$-interior algebras, where $\ddot{\delta}$ is the unique point of $\ddot{Q}$ on $S$ lifting the identity of $S(\ddot{Q}).$
\end{subsec}

\begin{subsec} Consider the local pointed group $Q_{\delta}$ on $A_1$ such that $Q_{\delta}\leq P_{\gamma}$ corresponding to the local pointed group $Q'_{\delta'}$ on $A'_1$ such that $Q'_{\delta'}\leq P'_{\gamma'}.$ Let $b_{\delta}$ be the unique block  of $kC_N(Q)$ determined by $\delta$, and let $b'_{\delta'}$ be the unique block  of $kC_{N'}(Q')$ determined by $\delta'.$ If $N_G(Q)_{b_{\delta}}$ denotes the stabilizer in $N_G(Q)$ of the block $b_{\delta},$ and similarly $N_{G'}(Q')_{b'_{\delta'}}$ denotes the stabilizer of $b'_{\delta'},$ then we have the inclusions
\[N_G(Q_{\delta})\subseteq N_G(Q)_{b_{\delta}}\] and  \[N_{G'}(Q'_{\delta'})\subseteq N_{G'}(Q')_{b'_{\delta'}}.\]
\end{subsec}

We define $(G,\Gamma)$-fusions in terms of $\Gamma$-graded bimodules, in a way similar to \cite[7.2--7.5]{L2}

\begin{definition} We say that an element $\varphi\in K=\Aut^\Gamma(Q)$ is a $(G,\Gamma)$-fusion of $Q_\delta$ in $A_\gamma$ if there is an homogeneous isomorphism of some degree of $\Gamma$-graded bimodules
\[i\mathcal{O}Gj\simeq(i\mathcal{O}Gj)_\varphi,\]
where $i\in \gamma$ and $j\in\delta$.
\end{definition}

\begin{lemma}\label{the_tricky_lemma} With the above notations, and identifying $Q$, $Q'$ and $\ddot Q$ via $\sigma$ and $\sigma'$, the following statements hold.

{\rm1)} We have the group isomorphism
\[N^{{K}}_G(Q_{\delta})/C_N(Q)\simeq F_{A_\gamma}^{\Gamma}(Q_{\delta}).\]

{\rm2)}  The $\Gamma$-graded $\mathcal{O}Q$-interior algebra embedding {\rm(\ref{11})} gives the inclusion
\[F_{A_\gamma}^{\Gamma}(Q_{\delta})\subseteq F_S({\ddot Q}_{\ddot \delta})\]
and the equality
\[F_{A_\gamma}^{\Gamma}(Q_{\delta})=F_{{A'}_{\gamma'}}^{\Gamma} (Q'_{\delta'}).\]
\end{lemma}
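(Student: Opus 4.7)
The plan is to prove the two parts in sequence: part 1) as a graded analogue of the classical identification $N_G(Q_\delta)/C_G(Q)\simeq F_{A_1}(Q_\delta)$, and part 2) as a consequence of transporting fusions through the embedding (\ref{11}). Throughout I make use of the fact that, since $\ddot M$ induces a basic graded Morita equivalence, Proposition \ref{equiv_statements_for_basic} guarantees that $\sigma$ and $\sigma'$ are group isomorphisms and that $S$ is a Dade $\ddot P$-algebra, so in particular $S_{\ddot\delta}$ carries only the trivial $\Gamma$-grading concentrated in degree $1$.

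For part 1), I would define
\[\Phi\colon N^K_G(Q_\delta)\longrightarrow F_{A_\gamma}^{\Gamma}(Q_\delta),\qquad x\longmapsto \varphi_x,\]
using the automorphism $\varphi_x(u)=u^{x^{-1}}$ introduced in the proof of Proposition \ref{gr_ext_quotient}. After replacing $j\in\delta$ by an $(A_1)^Q$-conjugate (possible because $x$ stabilizes the point $\delta$), right multiplication by $x\in A_{\omega(x)}$ supplies a homogeneous degree-$\omega(x)$ isomorphism $i\mathcal{O}Gj\to (i\mathcal{O}Gj)_{\varphi_x}$ of $\Gamma$-graded bimodules, so $\Phi$ is a well-defined group homomorphism. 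The kernel is $C_N(Q)$: an element $x$ lies in it precisely when $\varphi_x=\mathrm{id}$ (forcing $x\in C_G(Q)$) \emph{and} the accompanying graded isomorphism can be chosen grade-preserving (forcing $\omega(x)=1$, i.e.\ $x\in N$). Surjectivity is the delicate step: given any graded fusion $\varphi$ realized by a homogeneous isomorphism of some degree $\lambda\in\Gamma$, I would use the $P$-stable basis of $A_\gamma$ supplied by the basic hypothesis to lift the isomorphism to the group algebra level, and then apply a Skolem--Noether type argument to extract a group element $x\in N^K_G(Q_\delta)\cap \omega^{-1}(\lambda)$ realizing $\varphi$.

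For part 2), I would feed any homogeneous isomorphism $i\mathcal{O}Gj\simeq (i\mathcal{O}Gj)_\varphi$ into the $\Gamma$-graded $Q$-interior algebra embedding (\ref{11})
\[A_\delta\longrightarrow \Res_{\sigma^{-1}}(S_{\ddot\delta})\otimes_\mathcal{O} \Res_{\sigma'\circ\sigma^{-1}}(A'_{\delta'}).\]
The resulting isomorphism sits inside the tensor product, where the first factor is concentrated in degree $1$ while the second is genuinely $\Gamma$-graded; restricting to each tensor factor separately yields simultaneously $\varphi\in F_S(\ddot Q_{\ddot\delta})$ (after identifying $Q$ with $\ddot Q$ via $\sigma$) and $\varphi\in F_{A'_{\gamma'}}^{\Gamma}(Q'_{\delta'})$ (after identifying $Q$ with $Q'$ via $\sigma'\circ\sigma^{-1}$). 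This gives both the inclusion into $F_S(\ddot Q_{\ddot\delta})$ and one direction of the equality. The reverse inclusion follows by running the symmetric argument via the analogous embedding associated to the dual bimodule of $\ddot M$, which also induces a basic graded Morita equivalence.

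The main obstacle is the surjectivity step in part 1): one must show that an abstract homogeneous isomorphism of $\Gamma$-graded bimodules is realized by conjugation by an actual group element of $N^K_G(Q_\delta)$, not merely by some homogeneous unit of $A$. This is exactly where the basic hypothesis pays off through the $P$-stable basis from Proposition \ref{equiv_statements_for_basic}, which reduces the problem to the level of the group algebra. The bookkeeping in part 2) is more routine but requires care in checking that the induced automorphisms on the two tensor factors respect the grading in the correct way, which ultimately rests on $\sigma$ being a group isomorphism.
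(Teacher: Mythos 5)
Your proposal follows essentially the same route as the paper: part 1) is the graded version of Puig's local fusion theorem (the map $x\mapsto\varphi_x$, realized by right multiplication by $x$ as a homogeneous isomorphism of degree $\omega(x)$, with kernel $C_N(Q)$ because ``trivial automorphism of $Q$'' together with ``degree $1$'' exactly characterizes $C_N(Q)$), and part 2) transports fusions through the embedding (\ref{11}) exactly as in the K\"ulshammer--Puig argument \cite[1.17]{KP}; the paper disposes of the surjectivity in 1) by citing \cite[Theorem 3.1]{Puig2} (see also \cite[7.2--7.5]{L2}). Two small corrections to your justifications. First, you attribute the $P$-stable basis needed for surjectivity to Proposition \ref{equiv_statements_for_basic}, i.e.\ to the basic hypothesis; but that proposition produces a $P$-stable basis of the Hecke algebra component $\hat A_{\hat\gamma}$, whereas the stable basis that makes the Skolem--Noether-type argument work for $A_\gamma=i\mathcal{O}Gi$ is just the image of the $G\times G$-stable basis $G$ of $\mathcal{O}G$, available for any source idempotent. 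So part 1) does not use basicness at all; your argument survives, but for a different reason than you state. Second, in part 2) the step ``restricting to each tensor factor separately'' is precisely the nontrivial content of \cite[1.17]{KP}: a homogeneous unit of $S_{\ddot\delta}\otimes A'_{\delta'}$ normalizing the image of $Q$ need not be a pure tensor, and one needs the simplicity of $S(\ddot Q)$ (the Dade algebra structure of $S$) to split the induced automorphism into a factor acting on $S$ and a factor acting on $A'_{\delta'}$; this is the point deserving detail rather than the grading bookkeeping.
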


\begin{proof} 1) As in \cite[Theorem 3.1]{Puig2} (see also \cite[7.2--7.5]{L2}), an homogeneous isomorphism between the $(i\mathcal{O}Gi,\mathcal{O}Q)$-bimodules $i\mathcal{O}Gj$ and $(i\mathcal{O}Gj)_\varphi$ is given by right multiplication with an element $x\in N_G^K(Q_\delta)$. The condition $x\in C_N(Q)$ means that $x$ defines a graded bimodule isomorphism of degree $1$ and a trivial automorphism of $Q$.

2) Both statements follow by the argument of \cite[1.17]{KP} and by taking into account that homogeneous isomorphisms of graded bimodules are induced by right multiplication with homogeneous invertible elements.
\end{proof}

\begin{lemma} Assume that $Q\subseteq N_G^K(Q_\delta)$. Let $T$ be a defect group of $b_{\delta}$ regarded as a primitive idempotent of $kC_N(Q)^{N^{K}_G(Q_{\delta})}.$ Then the group $T$ can be chosen such that $Q_\delta \le T_{\mu}\leq P_{\gamma},$ where $\nu $ is a local point of  $T$ on $kC_N(Q).$

Furthermore, in this situation we have that $T=N^{K}_P(Q_{\delta})$, and $T_{\Br_Q(\mu)}$  is a defect pointed group of $N^{K}_G(Q_{\delta})_{\{b_{\delta}\}}.$
\end{lemma}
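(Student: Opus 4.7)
The plan is to identify $T$ in two stages: first apply standard local pointed group theory to obtain the defect of $b_\delta$ in the larger stabilizer $N_G(Q_\delta)$, then descend to $N^K_G(Q_\delta)$ via Mackey's formula and Rosenberg's Lemma. Since $P_\gamma$ is a defect pointed group of $G_{\{b\}}$ on the $N$-interior $G$-algebra $A_1=\mathcal{O}Nb$ and $Q_\delta\le P_\gamma$, a classical descent argument (along the lines of the Puig-theoretic version of Brauer's First Main Theorem) provides a local point $\bar{\mu}$ of $N_P(Q_\delta)$ on $A_1$ with $Q_\delta\le N_P(Q_\delta)_{\bar{\mu}}\le P_\gamma$, together with an identity
\[b_\delta=\Tr^{N_G(Q_\delta)}_{N_P(Q_\delta)}(c)\]
in $kC_N(Q)^{N_G(Q_\delta)}$ for some $c\in kC_N(Q)^{N_P(Q_\delta)}$, exhibiting $N_P(Q_\delta)$ as a defect group of $b_\delta$ as primitive idempotent of this larger fixed ring.

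Next I would apply Mackey's formula to the composition $\Res^{N_G(Q_\delta)}_{N^K_G(Q_\delta)}\circ\Tr^{N_G(Q_\delta)}_{N_P(Q_\delta)}$, noting that each double-coset representative $g$ lies in $N_G(Q_\delta)$ and so fixes both $Q$ and $\delta$. Hence
\[N^K_G(Q_\delta)\cap N_P(Q_\delta)^g=N^K_{P^g}(Q_\delta),\]
and we obtain
\[b_\delta=\sum_{g}\Tr^{N^K_G(Q_\delta)}_{N^K_{P^g}(Q_\delta)}(c^g)\]
in the smaller fixed ring $kC_N(Q)^{N^K_G(Q_\delta)}$. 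Since $b_\delta$ is primitive there, Rosenberg's Lemma forces a single summand to absorb it, and after replacing $P$ by its $N_G(Q_\delta)$-conjugate $P^g$ (which only shifts the defect pointed group within its $G$-orbit) we may take $g=1$, obtaining the upper bound $T\le N^K_P(Q_\delta)$. Equality is then checked by observing that a strict inclusion $T\lneq N^K_P(Q_\delta)$ would, upon inflating back via $\Tr^{N_G(Q_\delta)}_{N^K_G(Q_\delta)}$ and using that this transfer is surjective from the $N^K_G(Q_\delta)$-fixed ring onto a subring of the $N_G(Q_\delta)$-fixed ring, contradict the minimality of $N_P(Q_\delta)$ already established.

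For the local point $\mu$ and the final assertion, I would decompose the identity of $(A_1)^T$ with $T=N^K_P(Q_\delta)$ into orthogonal primitive idempotents refining chosen representatives $i\in\gamma$ and $j\in\delta$, in the spirit of Lemma~\ref{point_relation}, and pick one $\ell\in\mu$ with $j\ell=\ell j=\ell$ and $i\ell\ne 0$; this produces $Q_\delta\le T_\mu\le P_\gamma$, with $\mu$ local since $\mu\le\bar{\mu}$. Applying $\Br_Q$ yields the local point $\Br_Q(\mu)$ of $T$ on $kC_N(Q)$, and combining this with the transfer identity from the second step, we conclude that $T_{\Br_Q(\mu)}$ is a defect pointed group of $N^K_G(Q_\delta)_{\{b_\delta\}}$. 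The main difficulty lies in the Mackey--Rosenberg descent: since $|N_G(Q_\delta):N^K_G(Q_\delta)|$ need not be prime to $p$, a naive $p'$-index reduction is unavailable, and one must carefully use $N_G(Q_\delta)$-conjugation on the ambient $P$ to collapse the Mackey decomposition into a single transfer from $N^K_P(Q_\delta)$ and to secure the minimality of $T$.
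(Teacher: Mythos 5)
Your upper bound $T\le N^K_{P}(Q_\delta)$ (after conjugation) is sound in outline, and the Mackey--Rosenberg mechanism is indeed the same device the paper uses; but there are two genuine problems. First, your opening step assumes as ``classical'' that $b_\delta$ is primitive in $kC_N(Q)^{N_G(Q_\delta)}$ with defect group $N_P(Q_\delta)$ and that $b_\delta=\Tr^{N_G(Q_\delta)}_{N_P(Q_\delta)}(c)$. In the present setting $b_\delta$ is a block of $kC_N(Q)$ for a \emph{normal} subgroup $N\trianglelefteq G$, and $P_\gamma$ is a defect pointed group of $G_{\{b\}}$ on $\mathcal{O}Nb$ (so $P\not\le N$ in general); the statement you invoke is essentially the full-normalizer instance of the lemma itself, not an off-the-shelf fact. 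The paper sidesteps this entirely: it lifts $b_\delta$ through $\Br_Q:(\mathcal{O}N)^{N^K_G(Q_\delta)}\to kC_N(Q)^{N^K_G(Q_\delta)}$ to a point $\beta$ with the same defect group $T$, observes $Q_\delta\le N^K_G(Q_\delta)_\beta\le G_{\{b\}}$, and applies Mackey directly to $b\in(\mathcal{O}N)^G_P$ over the double cosets $N^K_G(Q_\delta)\backslash G/P$, which yields $T\le N^K_G(Q_\delta)\cap P^x$ in one stroke without ever passing through the $N_G(Q_\delta)$-fixed ring.

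Second, and more seriously, your proof of the equality $T=N^K_P(Q_\delta)$ does not go through. ``Inflating back via $\Tr^{N_G(Q_\delta)}_{N^K_G(Q_\delta)}$'' applied to a relation $b_\delta=\Tr^{N^K_G(Q_\delta)}_{T}(d)$ produces $|N_G(Q_\delta):N^K_G(Q_\delta)|\,b_\delta$ on the left, which vanishes when $p$ divides that index --- exactly the case you concede cannot be excluded (e.g.\ $\omega(Q)$ elementary abelian of rank $\ge 2$ already makes $\Aut(\omega(Q))$ have order divisible by $p$). So no contradiction with the minimality of $N_P(Q_\delta)$ is obtained, and the lower bound on $T$ is missing. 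The paper gets the lower bound by a different idea: having arranged $Q_\delta\le N_P(Q_\delta)_\mu\le P_\gamma$ and $N_P(Q_\delta)_\mu\le N^K_G(Q_\delta)_\beta$ as pointed groups on $\mathcal{O}N$, the defect pointed group $T_{\bar\mu}$ of $\beta$ is a \emph{maximal} local pointed group contained in $N^K_G(Q_\delta)_\beta$, while the containment $T\le N_P(Q_\delta)$ places $T_{\bar\mu}$ inside the local pointed group $N_P(Q_\delta)_\mu$; maximality then forces $T_{\bar\mu}=N_P(Q_\delta)_\mu$, whence the equality of groups. You need some substitute for this maximality argument (or another $p$-insensitive lower bound); without it the second assertion of the lemma, and hence the identification $T=N^K_P(Q_\delta)$ used later for $\ddot N_{\ddot Q}$ as a $kT$-module, is unproved.
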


\begin{proof} Let $\beta\in \mathcal{O}N^{N^{K}_G(Q_{\delta})}$ be the unique point lifting $b_{\delta}$ via the $\mathcal{O}$-algebra epimorphism
\[\Br_Q:\mathcal{O}N^{N^{K}_G(Q_{\delta})}\to kC_N(Q)^{N^{K}_G(Q_{\delta})}.\]
It is not difficult to see that this epimorphism actually restricts to the epimorphism
\[\Br_Q:\mathcal{O}N_T^{N^{K}_G(Q_{\delta})}\to kC_N(Q)_T^{N^{K}_G(Q_{\delta})},\]
and then $T$ is also a defect group of $\beta.$ The equality
\[\Br_Q(\delta)b_{\delta}=\Br_Q(\delta)\Br_Q(\beta)=\Br_Q(\delta)\] shows that we have $Q_{\delta}\leq N^{K}_G(Q_{\delta})_{\beta},$ and then we also get $N^{K}_G(Q_{\delta})_{\beta}\leq G_{\{b\}}.$
Now $b$ lies in
\[(\mathcal{O}N)^G_P\subseteq \sum_{x\in [N^{K}_G(Q_{\delta})\setminus G/P]}(\mathcal{O}N)^{N^{K}_G(Q_{\delta})}_{N^{K}_G(Q_{\delta})\cap P^x},\] and since $bj=j=jb$ for any $j\in \beta$ we obtain $T\leq P^x,$ for some $x\in G.$

Hence replacing each $P_{\gamma}$ and $Q_{\delta}$ by a $G$-conjugate, such that we still have $Q_{\delta}\leq P_{\gamma},$  we may assume that $T\leq P.$ More exactly $T\leq N_P(Q_{\delta}).$  The local pointed group $P_{\gamma}$  forces the existence of at least one local point $\mu \subseteq (\Od H)^{N_P(Q_{\delta})}$ with the property
\[N_P(Q_{\delta})_{\mu}\leq P_{\gamma}.\] The inclusion $Q\leq N_P(Q_{\delta})\leq N_G(Q_{\delta})$ shows that we may find, if necessary, some $G$-conjugate of $Q_{\delta }$ with
\[Q_{\delta}\leq N_P(Q_{\delta})_{\mu}\leq P_{\gamma}\]
and
\[Q_{\delta}\leq N_P(Q_{\delta})_{\mu}\leq N_G(Q_{\delta})_{\beta},\] since $\delta$ determines $\beta.$

Now $T,$ the defect group of $\beta,$ verifies $T\leq N_P(Q_{\delta}).$ The local point $\mu$ determines a local point $\bar{\mu}$ of $T$ on $\Od H$ with $T_{\bar{\mu}}\leq N_P(Q_{\delta})_{\mu}.$ The maximality of $T_{\bar{\mu}}$ forces the equality $T_{\bar{\mu}}=N_P(Q_{\delta})_{\mu}.$ The commutative diagram


\[\begin{xy} \xymatrix{ kC_N(Q)^T \ar[r]^{\Br^{kC_N(Q)}_T} &kC_N(T)
\\  \mathcal{O}N^T\ar[u]_{\Br^{\mathcal{O}N}_Q}\ar[ur]_{\Br_T^{\mathcal{O}N}}    } \end{xy}\]
proves $\Br^{kC_N(Q)}_T(\Br_Q(\mu))\neq 0$ and then $T_{\Br_Q(\mu)}$ is a defect pointed group of $N_{G}(Q_{\delta})_{b_{\delta}}.$
\end{proof}

\begin{subsec} Let $T'_{\mu'}\leq P'_{\gamma'}$ be the pointed group  corresponding to $T_{\mu}$ under the bijection given in \ref{s:pointbij}~(\ref{10}). It is not difficult to prove that $T'_{\Br_{Q'}(\mu')}$ is a defect pointed group of $N^{K}_{G'}(Q'_{\delta'})_{\{b'_{\delta'}\}}$ and that $T'=N^{K}_{P'}(Q'_{\delta'}).$ By Lemma \ref{the_tricky_lemma} we have
\[N^{K}_{G}(Q_{\delta })/C_N(Q)= N^{K}_{G'}(Q'_{\delta'})/C_{N'}(Q')\subseteq F_{S}({\ddot Q}_{\ddot{\delta}}).\]
Moreover,  embedding (\ref{8}) provides the   embedding
\[(\mathcal{O}G)_{\mu}\to S\otimes_{\mathcal{O}}(\mathcal{O}G')_{\mu'}\]
of $\Gamma$-graded $\mathcal{O}T$-interior algebras, which according to Proposition \ref{gr_ext_quotient} and Proposition \ref{gr_on_tens_simple_alg}, determines the   embedding
 \[(kN_G^{K}(Q_{\delta}))_{\widehat{\Br_Q(\mu)}}\to S(\ddot{Q})\otimes_k(kN_{G'}^{K}(Q'_{\delta'}))_{\widehat{\Br_{Q'}(\mu')}}\]
of $\mathcal{O}T$-interior $N^{K}_G(Q_{\delta})/C_N(Q)$-graded algebras. Recall also that here
\[S(\ddot{Q})=\End_{k}(\ddot{N}_{\ddot{Q}}),\] where ${\ddot N}_{\ddot{Q}}$ is a uniquely determined endo-permutation $kT$-module.
\end{subsec}

\begin{subsec} Consider the natural maps $\bar\omega:N^{K}_G(Q_{\delta})\to N^{K}_G(Q_{\delta})/C_N(Q)$ and $\bar\omega':N^{K}_{G'}(Q'_{\delta'})\to N^{K}_{G'}(Q'_{\delta'})/C_{N'}(Q')$, so in view of Lemma \ref{the_tricky_lemma} 2), we have, as in \ref{s:GN}, the corresponding subgroup
\[\{(g,g')\in N^{K}_G(Q_{\delta})\times  N^{K}_{G'}(Q'_{\delta'})\mid  \bar\omega (g)=\bar\omega(g') \} \]
of $N^{K}_G(Q_{\delta})\times  N^{K}_{G'}(Q'_{\delta'})$. It is clear that this subgroup coincides with $N_{\ddot{G}}^K(\ddot{Q}_{\delta\times \delta'})$, where $\delta\times\delta'$ is the unique point of $\ddot{Q}$ on $A_1\otimes A'_1$ such that $\Br_Q(\delta)\times \Br_{Q'}(\delta')\subseteq \Br_{\ddot{Q}}(\delta\times \delta')$.
\end{subsec}

By applying Theorem \ref{main} in this situation, we deduce the main result of this section.

\begin{theorem} \label{t:localequiv} Assume that $Q\subseteq N_G^K(Q_\delta)$. The indecomposable $\hat k_*\hat{\bar{N}}_{N^K_G(Q_{\delta})}(T_{\mu})$-module $V_{A_1}(T_{\mu})$ determines an indecomposable $kN_{\ddot{G}}^K(\ddot{Q}_{\delta\times \delta'})$-direct summand $Y$ of
\[\Ind_{\Delta(T)}^{N_{\ddot{G}}^K(\ddot{Q}_{\delta\times \delta'})}(\ddot{N}_{\ddot{Q}}),\] such that the bimodule
\[\Ind_{N_{\ddot{G}}^K(\ddot{Q}_{\delta\times \delta'})}^{N^{K}_G(Q_{\delta})\times   N^{K}_{G'}(Q'_{\delta'})}(Y)\] induces a basic $N^K_G(Q_\delta)/C_N(Q)$-graded Morita equivalence between $kN^{K}_G(Q_{\delta})b_{\delta}$ and  $kN^{K}_{G'}(Q'_{\delta'})b'_{\delta'}.$
\end{theorem}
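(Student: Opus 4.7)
The plan is to apply the converse direction of Theorem \ref{main} to the local data, taking $N_G^K(Q_\delta)$ and $N_{G'}^K(Q'_{\delta'})$ in place of $G$ and $G'$, $C_N(Q)$ and $C_{N'}(Q')$ in place of $N$ and $N'$, the common quotient $\Lambda:=N_G^K(Q_\delta)/C_N(Q)\simeq N_{G'}^K(Q'_{\delta'})/C_{N'}(Q')$ (the isomorphism provided by Lemma \ref{the_tricky_lemma}) as grading group, the blocks $b_\delta$ and $b'_{\delta'}$, the defect pointed groups $T_{\Br_Q(\mu)}$ and $T'_{\Br_{Q'}(\mu')}$, and $\ddot N_{\ddot Q}$ as the source module on the diagonal $\Delta(T)\subseteq N_{\ddot G}^K(\ddot Q_{\delta\times\delta'})$. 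The identification of $T$ with $T'$ and with $\Delta(T)$ via $\sigma$ and $\sigma'$ is legitimate because $\ddot M$ is basic, so Proposition \ref{equiv_statements_for_basic} $1)$ makes these maps isomorphisms.

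The defect hypothesis of Theorem \ref{main} is supplied by the preceding lemma and its counterpart for $G'$, transferred via the bijection of \ref{s:pointbij}~(\ref{10}). To supply the local analogue of the isomorphism $e$, I would combine Remark \ref{ext_Br_Quot_for_local}, Proposition \ref{gr_ext_quotient} and Proposition \ref{gr_on_tens_simple_alg} with the embedding (\ref{11}) to obtain the $\Lambda$-graded $\mathcal{O}T$-interior algebra embedding
\[(kN_G^{K}(Q_{\delta}))_{\widehat{\Br_Q(\mu)}}\longrightarrow S(\ddot{Q})\otimes_k(kN_{G'}^{K}(Q'_{\delta'}))_{\widehat{\Br_{Q'}(\mu')}}\]
already recorded in the paragraph preceding the theorem, and then argue that it is an isomorphism: $S(\ddot Q)=\End_k(\ddot N_{\ddot Q})$ is a Dade $T$-algebra, so a rank count on each homogeneous component (adapting Puig's ungraded argument) forces surjectivity.

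To construct $Y$, I would run the Clifford-correspondence machinery of \ref{n:endom-alg}--\ref{gammahat} in the local setting. The indecomposable projective $\hat k_*\hat{\bar N}_{N_G^K(Q_\delta)}(T_\mu)$-module $V_{A_1}(T_\mu)$, transported along the isomorphism of twisted group algebras induced by the local $e$, becomes the multiplicity module of a unique indecomposable $kN_{\ddot G}^K(\ddot Q_{\delta\times\delta'})$-summand $Y$ of $\Ind_{\Delta(T)}^{N_{\ddot G}^K(\ddot Q_{\delta\times\delta'})}(\ddot N_{\ddot Q})$, and by construction this $Y$ satisfies the multiplicity-module compatibility condition of Theorem \ref{main}.

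Theorem \ref{main} then yields the desired $\Lambda$-graded Morita equivalence between $kN_G^K(Q_\delta)b_\delta$ and $kN_{G'}^K(Q'_{\delta'})b'_{\delta'}$, induced by the bimodule $\Ind_{N_{\ddot G}^K(\ddot Q_{\delta\times\delta'})}^{N_G^K(Q_\delta)\times N_{G'}^K(Q'_{\delta'})}(Y)$. Basicness is inherited from $\ddot M$: since $S$ is a Dade $\ddot P$-algebra by Proposition \ref{equiv_statements_for_basic}, its Brauer construction $S(\ddot Q)$ is a Dade $\ddot Q$-algebra, $\ddot N_{\ddot Q}$ is an endo-permutation $kT$-module, and the restriction $\sigma|_{\ddot Q}\colon\Delta(T)\to T$ is an isomorphism, so Proposition \ref{equiv_statements_for_basic} applied to the local setting grants basicness in the sense of Definition \ref{basic_graded_def}. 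The main obstacle is the surjectivity step of the second paragraph: carefully verifying that the displayed embedding of extended Brauer quotients is an isomorphism demands keeping track of both the $\Lambda$-grading and the interaction of the local idempotents $\Br_Q(\mu)$, $\Br_{Q'}(\mu')$ with the tensor factor $S(\ddot Q)$.
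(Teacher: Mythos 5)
Your proposal is correct and follows essentially the same route as the paper: the paper's own proof consists of the single sentence ``By applying Theorem \ref{main} in this situation, we deduce the main result of this section,'' relying on the preparatory material of Section 6 (the defect pointed groups $T_{\Br_Q(\mu)}$ and $T'_{\Br_{Q'}(\mu')}$, the identification of the diagonal subgroup with $N_{\ddot G}^K(\ddot Q_{\delta\times\delta'})$, and the $N_G^K(Q_\delta)/C_N(Q)$-graded embedding of extended Brauer quotients obtained from Propositions \ref{gr_ext_quotient} and \ref{gr_on_tens_simple_alg}), which is exactly the data you assemble before invoking the converse direction of Theorem \ref{main} via \ref{sufficiency_for_morita_eq}. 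The extra details you supply (the rank/Dade-algebra argument and the Clifford-correspondence construction of $Y$) are consistent with, and merely flesh out, what the paper leaves implicit.
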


\begin{remark} 1) If in Theorem \ref{t:localequiv} we have $G=N$, then we obtain the main result of Puig and Zhou \cite[Theorem 1.4]{Puig3}, and also of Hu \cite[Theorem 1.1]{Hu}.

2) On the other hand, if we assume that $b$ and $b'$ are the principal blocks of $\mathcal{O}N$ and $\mathcal{O}N'$ respectively, $\Gamma$ is a $p'$-group, $M$ is a splendid $\mathcal{O}(N\times N')$-module, and we take $K=1$ instead of $K=\Aut^\Gamma(Q)$ (hence $N^K_G(Q)=C_G(Q)$), the we obtain \cite[Corollary 3.9.a)]{Marcus}.
\end{remark}

\end{document}